\titlespacing{\section}{15pt}{*4}{*1.5}
\titlespacing{\subsection}{30pt}{*4}{*1.5}
\titlespacing{\subsubsection}{45pt}{*4}{*1.5}
\newcommand{\fonction}[5]{#1:{\left\{\begin{array}{lcl}
#2 & \longrightarrow & #3 \\
 #4 & \longmapsto & #5 \end{array}\right.}}
\newcommand{\itg}{\displaystyle\int }
\newcommand{\R}{\mathbb{R}}
\newcommand{\C}{\mathbb{C}}
\newcommand{\cont}{\mathcal{C}}
\newcommand{\pinf}{^{\infty}}
\newcommand{\s}{\mathcal{S}}
\newcommand{\Spd}{\mathbb{S}^{d-1}}
\newcommand{\M}{\mathcal{M}}
\author{\textsc{ Corentin Gentil}\footnote{corentin.gentil@ens.fr},  \textsc{ Côme Tabary}\footnote{come.tabary@ens.fr} }
\date{January 2024}
\title{Strichartz estimates for geophysical fluid equations using Fourier restriction theory}
\begin{document}
\maketitle

\section*{Abstract}

We prove Strichartz estimates for the semigroups associated to stratified and/or rotating inviscid geophysical fluids using Fourier restriction theory. We prove new results for rotating stratified fluids, and recover results from \cite{kohleetakada} for rotation only, and from \cite{lee_takada} for stratification only. Our restriction estimates are obtained by the slicing method \cite{nicola}, which relies on the well-known Tomas-Stein theorem for 2-dimensional spheres. To our knowledge, such a method has never been used in this setting. Moreover, when the fluid is stratified, our approach yields sharp estimates, showing that the slicing method captures all the available curvature of the surfaces of interest.

\section{Introduction}

\subsection{Main result}\label{sec:main}

The \emph{primitive system} describes the motion of incompressible geophysical fluids accounting for both the Earth's rotation and the stratification of the fluid \cite{charve} \cite{iwabuchi} \cite{pedlosky}. We consider in this paper its inviscid limit:
\begin{equation}
    \label{pe}
    \left\{\begin{array}{rcl}
\partial_t u + (u \cdot \nabla) u + \varepsilon^{-1} e_3 \wedge u + F^{-1} \varepsilon^{-1} \theta e_3 &=& - \varepsilon^{-1} \nabla p\\
\partial_t \theta + (u\cdot\nabla)\theta - F^{-1} \varepsilon^{-1} u_3 &=&0\\
\nabla \cdot u &=&0\\
(u\vert_{t=0},\theta\vert_{t=0}) &=& (u^0,\theta^0),
 \end{array}\right.
\end{equation} 
Here $u=(u_1,u_2,u_3)^T$ is the fluid velocity, $p$ the pressure, and $\theta$ the temperature. The effects of stratification and rotation are quantified respectively by the physical Froude number $\varepsilon F$, comparing the advection term to the gravitational forces exerted on the fluid, and the Rossby number $\varepsilon$, characterizing the importance of rotation over advection.  We follow the notations of \cite{charve} and call $F$ the Froude number in the sequel.

The above system exhibits a dispersive behaviour closely related to the semigroup generated by the fluid rotation and stratification, which is defined by the oscillatory integral:
\begin{equation}
    \label{op_pri}
e^{\pm \frac 1 \varepsilon i t p_F(D)}g : (t,x) \longmapsto \itg_{\R^3} e^{ ix\cdot \xi \pm \frac 1 \varepsilon i t p_F(\xi)}\hat g (\xi) d\xi,
\end{equation}
where $\hat g$ is the Fourier transform of $g$, and the phase is given by $$p_F(\xi) = \dfrac{ \vert \xi \vert_F}{F \vert \xi \vert}\ , \ \ \ \ \ \vert \xi \vert_F=\vert (\xi_1, \xi_2, F \xi_3)\vert.$$
The expression of this operator and its connection with the primitive system are derived in Section 3.1. The main results of this paper are the following Strichartz estimates:
 
\begin{theorem}
\label{th:pri}
    There exists a universal constant $C>0$ such that the following homogeneous Strichartz estimate holds for all $g\in \dot{H}^1 (\R^3)$:
\begin{equation}
\label{esti_pri}
    \Vert e^{\pm \frac 1 \varepsilon i t p_F(D)}g \Vert_{L^6_{t} L^6_{x}} \leq C \varepsilon^{\frac{1}{6}} \frac{F^{\frac 1 2}}{( F-1 )^{\frac 1 6}} \Vert g \Vert_{\dot H^1}.
\end{equation} 
Moreover, for all $f\in L^1_t\dot H^1_x$, the following inhomogeneous estimate holds:
\begin{equation}
\label{duhamel_pri}
    \left\Vert \itg_0^t e^{\pm \frac 1 \varepsilon i (t-t') p_F(D)}f(t') dt' \right\Vert_{L^6_{t} L^6_{x}} \leq C \varepsilon^{\frac{1}{6}} \frac{F^{\frac 1 2}}{( F-1 )^{\frac 1 6}} \Vert f \Vert_{L^1_t\dot H^1_{x}}.
\end{equation} 
\end{theorem}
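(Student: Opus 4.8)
The plan is to reduce the Strichartz estimates to a Fourier restriction / extension estimate for the surface defined by the phase $p_F$, and then to invoke the Tomas–Stein theorem via the slicing method as advertised in the abstract. Concretely, the $L^6_tL^6_x$ endpoint on the right-hand side suggests we are in a situation where the relevant $TT^*$ argument pairs $L^{6/5}_{t,x}$ with $L^6_{t,x}$, and the exponent $6$ on a $2$-dimensional sphere is exactly the Tomas–Stein exponent $2(d+1)/(d-1)$ with $d=3$. So the first step is to write $e^{\pm\frac1\varepsilon itp_F(D)}g$ as (a constant times) the adjoint Fourier restriction operator associated to the hypersurface $\Sigma_F = \{(\tau,\xi) : \tau = \mp\frac1\varepsilon p_F(\xi)\}$ in $\R^{1+3}$, against the measure $\hat g(\xi)\,d\xi$ pushed forward onto $\Sigma_F$. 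Using a Littlewood–Paley decomposition and the homogeneity of $p_F$ (degree $0$), one localizes $g$ to a dyadic annulus $|\xi|\sim\lambda$; scaling in $x$ and $t$ then shows the estimate is scale-invariant, which is consistent with the homogeneous norm $\dot H^1$ appearing on the right (the power $1 = d/2 - d/p$ with $d=3$, $p=6$ is the scaling-critical Sobolev index for $L^6_x$ in $\R^3$), so it suffices to prove the estimate on a single dyadic shell and then sum — or rather, since the phase is $0$-homogeneous, to work directly on the unit shell.

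**The heart of the matter** is the decay/restriction estimate. On a fixed dyadic annulus $|\xi|\sim1$ one wants an extension estimate of the form $\|\widehat{g\,d\sigma_F}\|_{L^6_{t,x}(\R^{1+3})} \lesssim A(F,\varepsilon)\,\|g\|_{L^2}$, where $d\sigma_F$ is the measure on the graph of $\mp\frac1\varepsilon p_F$ over the annulus. The slicing method from \cite{nicola} handles this by foliating the three-dimensional frequency region into two-dimensional slices on which the phase restricts to a function whose graph is a piece of a genuinely curved surface — ideally (a rescaled piece of) the standard sphere $\Spd = \mathbb{S}^2$ — and applying Tomas–Stein slice-by-slice, then reassembling via Minkowski's inequality in the slicing variable together with an $L^2$ orthogonality (Plancherel) in that same variable. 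I would try to slice along the $\xi_3$-direction (the axis of rotation/stratification): for fixed $\xi_3$, the level sets of $p_F(\xi_1,\xi_2,\xi_3) = |(\xi_1,\xi_2,F\xi_3)|/(F\sqrt{\xi_1^2+\xi_2^2+\xi_3^2})$ in the $(\xi_1,\xi_2)$-plane should be circles, and the restriction of the phase to such a slice, after an affine change of variables, should look like a paraboloid or sphere of the appropriate aperture, with curvature quantitatively controlled. The powers $F^{1/2}$ and $(F-1)^{-1/6}$ must emerge from this computation: $F^{1/2}$ from the anisotropic rescaling $\xi_3\mapsto F\xi_3$ that turns $|\xi|_F$ into a round norm (a Jacobian/volume factor), and $(F-1)^{-1/6}$ from the vanishing of the relevant curvature (equivalently, the Hessian determinant of the sliced phase) as $F\to1$, where $p_F$ degenerates to the constant $1/F$ and the surface flattens — the exponent $1/6$ being $1/6 = 1 - (d-1)/2 \cdot \text{(something)}$, in any case the natural power produced by Tomas–Stein when the Hessian determinant is of size $\sim(F-1)$. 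The $\varepsilon^{1/6}$ is a straightforward time-rescaling $t\mapsto t/\varepsilon$, producing $\varepsilon^{1/(q)}$ with $q=6$.

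**The main obstacle** I anticipate is making the slicing geometry and the curvature bookkeeping precise and uniform: one must check that every relevant two-dimensional slice of $\Sigma_F$ is, after a controlled affine normalization, a piece of a sphere with $O(1)$ (up to the $(F-1)$ factors) curvature, that the pieces fit together with the claimed total power of $F$ and $F-1$, and that the degenerate slices (those passing near the poles $\xi_1=\xi_2=0$, where $p_F$ attains its extreme value $1$ and $|\nabla p_F|=0$) or near the equator contribute no worse than the generic ones — i.e. that the singular set of the phase is negligible or can be absorbed. One must also verify that the non-degeneracy required for Tomas–Stein (the slice surfaces having nonzero Gaussian curvature, not merely nonzero some principal curvature) actually holds for $p_F$, which is where the condition $F>1$ versus $F<1$ and the sign issues enter; this is presumably the content of the geometric computations in Section 3, and is the place where "the slicing method captures all the available curvature" is verified (by matching the exponent $1/6$ against a lower bound / sharp example). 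Finally, the inhomogeneous estimate \eqref{duhamel_pri} should follow from the homogeneous one \eqref{esti_pri} by the Christ–Kiselev lemma (the exponents $6 > 1$ on the left versus $L^1_t$ on the right put us strictly off the diagonal, so Christ–Kiselev applies), together with Minkowski's integral inequality in $t'$, so I expect that step to be essentially routine once \eqref{esti_pri} is in hand.
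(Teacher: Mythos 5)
Your high-level strategy --- reduce \eqref{esti_pri} to a restriction estimate on the graph of $p_F$ in $\R^4$, then slice and ultimately invoke Tomas--Stein --- matches the paper's architecture, but the slicing you propose is not the one the paper uses, and as written it would not go through. The paper foliates $\Mf = \{(\xi, p_F(\xi)) : \xi \in \R^3\}$ by the \emph{level sets of the phase}, that is by the last coordinate $\mu = p_F(\xi) \in (F^{-1},1)$, not by $\xi_3$. Each level set $\{p_F = \mu\}$ is a \emph{cone} in $\R^3$, $\xi_3 = \pm\sqrt{(\mu^2-F^{-2})/(1-\mu^2)}\,|\xi_h|$, and the estimate applied at each level $\mu$ is therefore a cone restriction estimate in $\R^3$ (Corollary~1 of the paper, with constant $|\rho|^{-1/6}$ depending on the opening angle $\rho$); Tomas--Stein on spheres only enters one level further down, inside the proof of that cone theorem (Section~2). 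You instead propose to slice at fixed $\xi_3$ and to apply Tomas--Stein directly to the graph of $(\xi_1,\xi_2)\mapsto p_F(\xi_1,\xi_2,\xi_3)$. That graph is a surface of revolution whose Gaussian curvature changes sign and hence vanishes at an intermediate radius $|\xi_h|$ for every fixed $\xi_3\neq 0$ (the radial second derivative $\partial_r^2 p_F$ has a zero), and it also degenerates as $|\xi_h|\to 0$ and $|\xi_h|\to\infty$. So the Tomas--Stein hypothesis fails on these slices even after a Littlewood--Paley localisation to $|\xi|\sim 1$. Since the choice of slicing direction is exactly where the method succeeds or fails, this is a genuine gap rather than a cosmetic difference.

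Two secondary remarks. First, the paper does not use a dyadic decomposition and reassembly at all: endowing $\Mf$ with $d\sigma = d\xi/|\xi|^2$ makes $g\mapsto\hat g\vert_{\Mf}$ an isometry $\dot H^1(\R^3)\to L^2(\Mf,d\sigma)$, so \eqref{esti_pri} is by duality \emph{equivalent} (with equal operator norms) to the restriction estimate $\|\hat f\vert_{\Mf}\|_{L^2(\Mf,d\sigma)} \le C\,\|f\|_{L^{6/5}(\R^4)}$, and the homogeneity of $p_F$ is absorbed by the choice of measure rather than by summing frequency shells. Second, the constants $F^{1/2}(F-1)^{-1/6}$ do not come from an anisotropic rescaling $\xi_3\mapsto F\xi_3$ or a Hessian determinant; they come from the weight $\mu(1-\mu^2)^{-1/3}(\mu^2-F^{-2})^{-2/3}$ produced by the change of variables $\xi_3\to\mu$, whose singularity at $\mu=F^{-1}$ already has the critical exponent $-2/3$ (the source of sharpness) while the one at $\mu=1$ has exponent $-1/3$ and is bluntly raised to $-2/3$, producing the factor $F/(F-1)^{1/3}$ before the Sobolev embedding $L^{6/5}(\R)\hookrightarrow\dot H^{-1/3}(\R)$ closes the argument. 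Finally, for \eqref{duhamel_pri} the paper uses a short direct duality argument (Lemma~5 in the Appendix) rather than Christ--Kiselev; because the right-hand side is $L^1_t\dot H^1_x$, the truncation $\int_0^t$ is handled by a pointwise bound on the indicator, so no Christ--Kiselev machinery is needed, although it would also apply.
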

The estimate \eqref{esti_pri} is sharp, in the following sense:
\begin{proposition}
    \label{prop:sharp}
    Let $p,\ q \geq 2$, and $s \in \R$. If the following estimate 
    \begin{equation}
    \label{sharppri}
        \Vert e^{\pm \frac 1 \varepsilon i t p_F(D)}g \Vert_{L^p_t L^q_x}\leq C \Vert g \Vert_{\dot H^s}
    \end{equation} holds for all $g\in \dot H^s$, then necessarily $p,\ q$ and $s$ must satisfy the scaling conditions:
    \begin{equation}
    \label{scaling} \frac 2 p + \frac 1 q \leq \frac 1 2, \ \ \ \ \ \ \text{and} \ \ \ \ \ \ s=3\bigg(\frac 1 2 - \frac 1 q\bigg).\end{equation}
\end{proposition}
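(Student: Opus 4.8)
The plan is to derive the two conditions in \eqref{scaling} by two independent tests of the hypothesised estimate \eqref{sharppri}: a dilation (``scaling'') argument produces the equality $s = 3(\tfrac12 - \tfrac1q)$, and a Knapp-type example produces the inequality $\tfrac2p + \tfrac1q \le \tfrac12$.

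\emph{The scaling condition.} The key structural fact is that $p_F$ is homogeneous of degree $0$, i.e. $p_F(\lambda\xi) = p_F(\xi)$ for all $\lambda > 0$. The substitution $\xi = \lambda\eta$ in \eqref{op_pri}, together with $\widehat{g(\lambda\,\cdot)}(\xi) = \lambda^{-3}\hat g(\xi/\lambda)$ and $p_F(\lambda\eta) = p_F(\eta)$, then shows that the propagator commutes with spatial dilations \emph{with no rescaling of time}: $e^{\pm\frac{1}{\varepsilon} i t p_F(D)}\big(g(\lambda\,\cdot)\big) = \big(e^{\pm\frac{1}{\varepsilon} i t p_F(D)}g\big)(\lambda\,\cdot)$. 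I would fix one Schwartz function $g$ with $\hat g$ supported in a fixed annulus $\{|\xi|\sim 1\}$, so that $\Vert e^{\pm\frac{1}{\varepsilon} i t p_F(D)}g\Vert_{L^p_tL^q_x}$ and $\Vert g\Vert_{\dot H^s}$ are both finite (the first by \eqref{sharppri} itself) and positive (the first since $e^{\pm\frac{1}{\varepsilon} i t p_F(D)}g|_{t=0} = g \neq 0$), apply \eqref{sharppri} to $g(\lambda\,\cdot)$, and use $\Vert F(\lambda\,\cdot)\Vert_{L^q_x} = \lambda^{-3/q}\Vert F\Vert_{L^q_x}$ (the $L^p_t$ norm being untouched) and $\Vert g(\lambda\,\cdot)\Vert_{\dot H^s} = \lambda^{\,s-3/2}\Vert g\Vert_{\dot H^s}$. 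This turns \eqref{sharppri} into $\lambda^{\,3/2 - 3/q - s}\lesssim 1$ for every $\lambda > 0$; letting $\lambda\to 0$ and $\lambda\to\infty$ forces the exponent to vanish, i.e. $s = 3(\tfrac12 - \tfrac1q)$.

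\emph{The Knapp condition.} Here I would test \eqref{sharppri} on data concentrated near the equator $\{\xi_3 = 0\}$. For small $\nu > 0$ set $\widehat{g_\nu} = \mathbf 1_{A_\nu}$ with $A_\nu = \{\,|\xi_1 - 1|\le\tfrac12,\ |\xi_2|\le\tfrac12,\ |\xi_3|\le\nu\,\}$, so that $|A_\nu|\sim\nu$ and $\Vert g_\nu\Vert_{\dot H^s}\sim\nu^{1/2}$ (since $|\xi|\sim 1$ on $A_\nu$). The decisive estimate is the oscillation bound $|p_F(\xi) - p_F(\xi')|\lesssim_F \nu^2$ for $\xi,\xi'\in A_\nu$: from $p_F(\xi) = \tfrac1F\big(1 + (F^2-1)\xi_3^2/|\xi|^2\big)^{1/2}$ and $\xi_3^2/|\xi|^2 \le 4\nu^2$ on $A_\nu$ one gets this directly, the point being that $p_F$ is flat to second order in the $\xi_1$ and $\xi_2$ directions and only quadratic in the one thin direction $\xi_3$. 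Consequently the phase $x\cdot\xi \pm \tfrac1\varepsilon t\, p_F(\xi)$ stays within $\tfrac\pi4$ of a constant as $\xi$ ranges over $A_\nu$, for all $(t,x)$ in a slab $[0,T]\times B$ with $T\sim\varepsilon\nu^{-2}$ and $B$ a box dual to $A_\nu$, $|B|\sim\nu^{-1}$; hence $|e^{\pm\frac{1}{\varepsilon} i t p_F(D)}g_\nu|\gtrsim |A_\nu|\sim\nu$ on that slab. Plugging into \eqref{sharppri},
\[
\nu^{\,1-\frac1q-\frac2p}\,\varepsilon^{\frac1p} \ \lesssim\ \big\Vert e^{\pm\frac{1}{\varepsilon} i t p_F(D)}g_\nu\big\Vert_{L^p_tL^q_x} \ \le\ C\,\Vert g_\nu\Vert_{\dot H^s}\ \sim\ C\,\nu^{\frac12},
\]
and, $C,\varepsilon,F$ being fixed, letting $\nu\to 0^+$ forces $1 - \tfrac1q - \tfrac2p \ge \tfrac12$, that is $\tfrac2p + \tfrac1q \le \tfrac12$.

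\emph{Where the difficulty lies.} Everything above is routine except the choice of the Knapp cap. At a generic point of the characteristic set $\mathrm{Hess}\,p_F$ is nondegenerate (rank $3$), and the analogous box would only yield the weaker condition $\tfrac2p + \tfrac3q \le \tfrac32$; the sharp exponent $\tfrac12$ comes precisely from the rank drop of $\mathrm{Hess}\,p_F$ along the equator, where $p_F|_{\mathbb S^2}$ attains a degenerate (Bott-type) minimum along the whole equatorial circle, so that a cap thin only in $\xi_3$ nevertheless keeps the phase coherent for a time $\sim\varepsilon\nu^{-2}$ --- two powers of $\nu^{-1}$, as for a genuinely one-dimensional quadratic phase. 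Recognising this, and then verifying the $O_F(\nu^2)$ oscillation bound together with the bookkeeping of the dual box $B$ and the lower bound $|e^{\pm\frac{1}{\varepsilon} i t p_F(D)}g_\nu|\gtrsim|A_\nu|$, is the only substantive step.
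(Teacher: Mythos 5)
Your proposal is correct and reaches both conditions by the same basic mechanism as the paper, namely a Knapp-type cap concentrated near the equator $\{\xi_3 = 0\}$ exploiting the $O(\nu^2)$ coherence of $p_F$ there. The only genuine difference is organizational: the paper uses a single two-parameter family $g_{\delta,R}$ (thinness $\delta$ in $\xi_3$, overall frequency scale $R$) and extracts the Sobolev equality from the $R$-dependence and the anisotropic inequality from the $\delta\to 0$ limit of that one family, whereas you separate these into a pure dilation argument (valid here because $p_F$ is homogeneous of degree $0$, so the propagator commutes with spatial dilations without any time rescaling) plus a single-scale Knapp example. The two are essentially equivalent --- the paper's $R$-parameter is exactly your dilation $\lambda$ applied to the Knapp cap --- but your split makes it transparent which structural feature (homogeneity vs.\ the degenerate minimum of $p_F|_{\mathbb{S}^2}$ along the equator) is responsible for which condition, at the mild cost of having to observe the dilation-commutation identity explicitly. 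Your closing remark that the sharp exponent $\frac12$ (rather than the $\frac32$ one would get from a generic nondegenerate point) reflects the rank drop of $\mathrm{Hess}\,p_F$ along the equatorial circle matches the paper's choice of recentering the phase at $p_F = F^{-1}$, which is the value saturating the Sobolev weight in the proof of Lemma 2.

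One small point of care: in the Knapp step you should take the cap $A_\nu$ (or the dual box $B$ and time window $T$) with an explicit small constant, as the paper does with its parameter $N$, so that the total phase variation over $A_\nu\times([0,T]\times B)$ is genuinely below $\pi/4$ and the lower bound $|e^{\pm\frac1\varepsilon itp_F(D)}g_\nu|\gtrsim|A_\nu|$ follows; as written the variation in $\xi_1,\xi_2$ is only $O(1)$ with an unspecified constant. This is purely a bookkeeping matter and does not affect the $\nu$-exponents, hence not the conclusion.
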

Moreover, one cannot find an estimate of the type of \eqref{sharppri} with an $L^p_tL^q_x$ norm on the left-hand side, and an $L^r_x$ norm on the right-hand side, with $p,q,r \in (2,\infty)$, justifying the resort to Sobolev regularity of the initial data. 

The proof we give of Theorem \ref{th:pri} resorts to restriction theory, and more specifically to the slicing method \cite{nicola}, therefore it differs from the dispersive approach of \cite{dut_2}, \cite{charve}, \cite{kohleetakada} and \cite{lee_takada}. A more in-depth comparison to the literature is made in Section 1.5. In Section 1.6 we apply the slicing method to other geophysical systems to recover already known estimates but with this new approach.

\subsection{Strichartz estimates}

Over the last decades, Strichartz estimates have become a staple in the analysis of evolution partial differential equations, see for instance \cite{bcg}, \cite{velo}, \cite{stri} and the references therein. Indeed, such estimates have proved to be a key tool in the derivation of the well-posedness of many systems, as the wave or the Schrödinger equation.

Considering a linear evolution PDE, a Strichartz estimate is a bound for the space-time norm of the solution $u$ by the norm of the initial datum $u_0$, that is to say an inequality of the type
$$ \Vert u \Vert_{L^p_tL^q_x} \leq C \Vert u_0 \Vert_{L^r_x},$$
where $ \Vert u \Vert_{L^p_t L^q_x}$ is the standard Lebesgue space-time norm on $L^p(\R,\ L^q(\R^d))$, and $p, q, r$ satisfy some scaling condition.

Strichartz inequalities are commonly obtained using the dispersive nature of the equation. The linear Schrödinger equation provides an explicit setting to illustrate the derivation and application of Strichartz estimates. Consider the system
\begin{equation}
\label{schrodingerhom}
{{\left\{\begin{array}{lrcl}
i\partial_t u + \Delta u =0, \ \ \ \text{for   } (t,x)\in\R^+ \times \R^d \\
u\vert_{t=0} = u_0, \end{array}\right.}}
\end{equation}
to which the solution is, for $t>0$, 
\begin{equation}
    \label{convolution}
    u(t,\cdot) = \frac{1}{(4\pi it)^{\frac d 2}} e^{i\frac{\vert \cdot \vert^2}{4t} } \star u_0.
\end{equation} 

Young's inequality applied to this explicit formula leads to the following dispersive estimate:
$$ \Vert u(t,\cdot) \Vert_{L\pinf_x(\R^d)} \leq C \frac{1}{(4\pi t)^{\frac d 2}} \Vert u_0 \Vert_{L^{1}_x(\R^d)},$$
for $ t >0$. This inequality expresses that, even though the total mass $\Vert u_0 \Vert_{L^{2}_x(\R^d)}$ is conserved by the Schrödinger semigroup, the propagation at different speeds of different frequency modes leads to a decay in $L\pinf_x$ norm.

The $TT^*$ argument (which states that the continuity of an operator $T$ is equivalent to that of its dual $T^*$ or of the operator $TT^*$, see \cite{velo}, \cite{kato}, \cite{keel}) allows one to convert a dispersive inequality into a Strichartz estimate (see Chapter 8 in \cite{bcg} on Strichartz estimates and applications to semilinear dispersive equations). For the homogeneous Schrödinger equation \eqref{schrodingerhom}, it writes: 
$$\Vert u \Vert_{L^q_t L^r_x} \leq C \Vert u_0 \Vert_{L^2_x},$$
with $q\in(2,\infty]$ and $r$ satisfying the scaling condition: 
\begin{equation}
    \label{scaling2}
    \frac 2 q + \frac d r = \frac d 2 \cdotp
\end{equation} 
Such a scaling stems directly from the equation itself: indeed, if $u$ is a solution of the Schrödinger equation, then $u_\lambda=u(\lambda^2 \cdot, \lambda \ \cdot)$ also solves \eqref{schrodingerhom} with suitably rescaled initial condition. Thereby, it is easy to see that the condition on the exponents translates the necessity of the Strichartz estimate to be scale invariant as well.

One of the applications of such estimates is the proof of existence of solutions in inhomogeneous settings, see the reference article \cite{keel}. Indeed, provided a forcing term $f\in L^{q'}_t L^{r'}_x$ with $(q',r')$ the dual exponents of $(q,r)$ satisfying \eqref{scaling2}, there exists a solution $u$ to the equation
$${{\left\{\begin{array}{lrcl}
i\partial_t u + \Delta u = f, \ \ \ \text{for   } (t,x)\in\R^+ \times \R^d \\
u\vert_{t=0} = 0, \end{array}\right.}}$$
which verifies the following inhomogeneous Strichartz estimate for any pair $(p,s)$, again satisfying \eqref{scaling2}:
$$ \Vert u \Vert_{L^{p}_t L^{s}_x} \leq C \Vert f \Vert_{L^{q'}_t L^{r'}_x}.$$
Inhomogeneous estimates in turn offer ways to treat nonlinear problems, as it is done in \cite{velo} for the nonlinearity $f= \pm u \vert u \vert^{p-1}$, by fixed points arguments.

The application of Strichartz estimates to the particular study of geophysical fluids will be overviewed in Section 1.5 of this introduction.

\subsection{Restriction theory}

For any function $f$ in the Lebesgue space $L^1(\R^d)$, its Fourier transform $\hat{f}$, defined for $\xi \in \R^d$ as
$$\hat f (\xi) = \int_{\R^d}e^{-ix\cdot\xi}f(x)dx,$$
is continuous, so the restriction of $\hat f$ to any surface of $\R^d$ is well defined. If $f$ is not integrable but only lies in some $L^p$ space, its Fourier transform can still be defined and the Hausdorff-Young inequality implies that if $1\leq p \leq 2$, then $\hat f$ is in the dual space $L^{p'}$, where $\frac{1}{p}+\frac{1}{p'}=1$. Thus $\hat f$ is only defined almost everywhere, and it is not clear whether it is possible to meaningfully restrict $\hat{f}$ to a hypersurface $S$ of $\R^d$, since $S$ is a set of measure zero. The restriction problem asks for which exponents $p$ and $q$ a restriction estimate of the form
$$ \Vert \hat{f}\vert_{S}\Vert_{L^q(S,d\sigma)} \leq C \Vert f \Vert_{L^p(\R^d)}$$ holds for all functions $f$ in the Schwartz class $\s(\R^d)$, where $d\sigma$ is a measure on $S$. Such estimates indeed allow one to define the restriction of the Fourier transform $f\mapsto \hat{f}\vert_{S}$ as a bounded operator from $L^p(\R^d)$ to $L^q(S,d\sigma)$.

Tomas and Stein discovered \cite{tsb}, \cite{ts} that this is possible for surfaces which are "sufficiently curved", a result that is now known as the Tomas-Stein theorem:

\begin{theorem}[\cite{tsb}, \cite{ts}] \label{tomasstein}
Let $d\geq 2$ and $p_0 = \frac{2(d+1)}{d+3}$. Let $S$ be a smooth and compact hypersurface in $\R^d$ endowed with a smooth measure $d\sigma$. Suppose $S$ has non vanishing Gaussian curvature at every point. Then for all $p\in [1, p_0]$, there exists a positive constant $C$ such that for any $f\in\s(\R^d)$:
$$ \Vert \hat f \vert_{S} \Vert_{L^2(S,d\sigma)} \leq C \Vert f \Vert_{L^p(\R^d)}.$$
\end{theorem}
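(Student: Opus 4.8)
The plan is to run the classical $TT^*$ argument of Stein and Tomas. It suffices to treat the endpoint $p=p_0$: once the bound $\Vert\hat f\vert_S\Vert_{L^2(S,d\sigma)}\le C\Vert f\Vert_{L^{p_0}}$ is available, the whole range $p\in[1,p_0]$ follows by interpolating it with the trivial inequality $\Vert\hat f\vert_S\Vert_{L^2(S,d\sigma)}\le\sigma(S)^{1/2}\Vert\hat f\Vert_{L^\infty}\le\sigma(S)^{1/2}\Vert f\Vert_{L^1}$. For the endpoint, write $R$ for the restriction operator $f\mapsto\hat f\vert_S$ and $R^*$ for its formal adjoint $g\mapsto(g\,d\sigma)^{\vee}$, the extension operator. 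By the $TT^*$ principle, the estimate $R\colon L^{p_0}(\R^d)\to L^2(S,d\sigma)$ is equivalent to the boundedness of $R^*R$, which is nothing but convolution with $\widehat{d\sigma}$, from $L^{p_0}(\R^d)$ to $L^{p_0'}(\R^d)$. Finally, since $S$ is compact, a smooth partition of unity reduces matters to the case in which $d\sigma$ is supported near a single point where, after a rotation, $S$ is the graph $\xi_d=\varphi(\xi')$ of a smooth function with $\det D^2\varphi\neq0$; this non-degeneracy is exactly the local expression of the non-vanishing Gaussian curvature hypothesis.

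The geometric ingredient is the stationary-phase decay estimate $|\widehat{d\sigma}(\xi)|\le C(1+|\xi|)^{-(d-1)/2}$, which follows by applying the principle of stationary phase to the oscillatory integral $\int_{\R^{d-1}}\psi(\eta)\,e^{-i(x'\cdot\eta+x_d\varphi(\eta))}\,d\eta$, whose critical points are non-degenerate precisely because $\det D^2\varphi\neq0$. Performing a Littlewood--Paley decomposition $\widehat{d\sigma}=\sum_{j\ge0}K_j$, with $K_j$ localized to the region $|\xi|\sim2^j$, one obtains two elementary bounds for each piece: convolution with $K_j$ maps $L^1\to L^\infty$ with norm $\lesssim\Vert K_j\Vert_{L^\infty}\lesssim2^{-j(d-1)/2}$, and it maps $L^2\to L^2$ with norm $\lesssim\Vert\widehat{K_j}\Vert_{L^\infty}\lesssim2^{j}$, the latter because $\widehat{K_j}$ is $d\sigma$ convolved with an $L^1$-normalized bump at scale $2^{-j}$ while the $\sigma$-measure of any ball of radius $2^{-j}$ is $O(2^{-j(d-1)})$. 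Interpolating these two bounds and summing a geometric series already proves the estimate for every $p<p_0$; at $p=p_0$ the two competing powers of $2^{j}$ cancel exactly — this cancellation is what pins down the exponent $p_0=\tfrac{2(d+1)}{d+3}$ — so every dyadic piece contributes the same order of magnitude and the triangle inequality alone no longer suffices.

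The main obstacle is therefore the endpoint, and the remedy is Stein's theorem on analytic interpolation of operators. The idea is to embed $R^*R$ into an analytic family $\{T_z\}_{z\in\C}$ of convolution operators with kernels $\widehat{\mu_z}$, where $\mu_z$ is obtained from $d\sigma$ by a Riesz-type analytic continuation in the direction normal to $S$: in the model case of the sphere one takes $\mu_z=\Gamma(z)^{-1}(1-|\xi|^2)_+^{z-1}$, which is entire in $z$ with $\mu_0$ equal to a constant times surface measure, and for a general graph one builds the analogous localized family from $(\xi_d-\varphi(\xi'))_+^{z-1}$. Combining the stationary-phase decay with the explicit factor produced by the fractional integration shows that $\widehat{\mu_z}$ is bounded — hence $T_z\colon L^1\to L^\infty$ — on the vertical line $\mathrm{Re}\,z=-\tfrac{d-1}{2}$, while $\mu_z$ itself is bounded — hence $T_z\colon L^2\to L^2$ by Plancherel — on the line $\mathrm{Re}\,z=1$, both bounds growing at most like $e^{\pi|\mathrm{Im}\,z|/2}$, which is admissible (and can be tamed by the customary $e^{z^2}$ factor). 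The value $z=0$, corresponding to $d\sigma$, lies at the fraction $\theta=\tfrac{d-1}{d+1}$ between these two lines, so Stein's interpolation theorem yields the bound $\Vert\widehat{d\sigma}\ast f\Vert_{L^{p_0'}}\lesssim\Vert f\Vert_{L^{p_0}}$ with $\tfrac1{p_0}=1-\tfrac{\theta}{2}$ and $\tfrac1{p_0'}=\tfrac{\theta}{2}$. Summing over the partition of unity and undoing the $TT^*$ reduction finishes the proof. (In the applications of the present paper $d=3$, so only the endpoint $p_0=4/3$ is needed, equivalently the bound $L^{4/3}(\R^3)\to L^4(\R^3)$ for convolution with $\widehat{d\sigma}$.)
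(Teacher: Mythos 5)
The paper does not prove this theorem: it is stated as a classical result and attributed to \cite{tsb} and \cite{ts}, so there is no proof in the paper against which to compare your argument. Your write-up is a correct account of the standard Tomas--Stein proof: reduction to the endpoint by interpolation with the trivial $L^1\to L^2$ bound, the $TT^*$ reduction to a bound for convolution with $\widehat{d\sigma}$, the stationary-phase decay $|\widehat{d\sigma}(x)|\lesssim (1+|x|)^{-(d-1)/2}$, a dyadic decomposition combined with real interpolation to settle the open range $p<p_0$, and Stein's analytic-family interpolation (via the Riesz-type continuation in the normal direction) to capture the endpoint $p_0$. One small correction to your final parenthetical: the slicing method in this paper ultimately applies Tomas--Stein to spheres in $\R^2$ (the circular slices of cones in $\R^3$), so the relevant instance here is $d=2$ with $p_0=6/5$, not $d=3$ with $p_0=4/3$.
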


The canonical example of such a surface is the sphere $\Spd$ of $\R^d$. When the hypothesis of curvature and compactness is weakened, obtaining restriction estimates does not become impossible, but the range of possible exponents usually gets smaller, as we will illustrate with the surfaces considered in this paper. A first step towards understanding the necessity of curvature is to consider a simple counter-example (found for instance in \cite{tao}) preventing any non-trivial restriction estimate on a hyperplane (endowed with the natural surface measure): consider $\varphi\in \s (\R^{d-1})$ a Schwartz function, and define $h:\R^d\mapsto\C$ by:
$$h(x)=\dfrac{\varphi(x')}{\sqrt{1+x_{d}^{2}}}, \; \; \; x=(x',x_{d}).$$
Then $h$ belongs to every $L^p$ space for $p>1$, but its Fourier transform is not well defined on the hyperplane ${\xi_d=0}$, or even on any bounded subset of this hyperplane. Curvature is thus mandatory to obtain restriction estimates. 

However, lack of compactness can sometimes be dealt with using scaling arguments, provided the surface possesses some symmetry. It is for instance the case of the paraboloid $$ \mathcal{P} = \{ (\xi ,\vert \xi \vert^2),\ \xi\in \R^{d-1} \} \subset \R^d, $$
which is non-compact but still belongs to the class of surfaces for which the Tomas-Stein theorem stated above applies. The two main reasons for this are the fact that the paraboloid is strongly symmetric, and the scaling satisfied by the paraboloid. Indeed, if we stretch the space by a factor $\lambda >0$ on the first $d-1$ coordinates and by a factor $\lambda^2$ in the last coordinate, the paraboloid $\mathcal{P}$ remains unchanged. 

\emph{In this paper, we will exclusively work with non-compact surfaces, so none of them satisfy the Tomas-Stein theorem. We will therefore be led to consider spaces with more sophisticated measures. }

Restriction theory has many applications to other topics, from number theory to the Bochner-Riesz and Kakeya conjectures \cite{tao}, but our point of interest here is its connection to Strichartz estimates. Indeed, it provides a pathway to prove Strichartz inequalities without relying on the dispersive estimate we presented above.

As it was shown by Strichartz in \cite{stri}, solutions to linear PDEs can be written as Fourier transforms on surfaces. Let us consider this time the free wave equation
\begin{equation}
{\left\{\begin{array}{lrcl}
\partial^2_{tt} u (t,x)- \Delta u(t,x) = 0, \ \ \ \ \ \ \ \ \ \text{for   } (t,x)\in\R^+ \times \R^d\\
u|_{t=0} = f\ \ \ \ \text{and}\ \ \ \ \partial_t u|_{t=0} = g. \end{array}\right.}
\end{equation}
Resolution in Fourier space gives a representation formula for the solution:
$$ u(t,x) = \itg_{\R^d_{\xi} } e^{it\vert \xi\vert + i x \cdot \xi} \gamma_+(\xi) d\xi + \itg_{\R^d_{\xi}} e^{-it\vert \xi\vert + i x \cdot \xi} \gamma_-(\xi) d\xi,$$
where $\gamma_{\pm} (\xi) =\frac{1}{2} \bigg(  \hat f(\xi) \pm \frac{\hat g(\xi)}{i\vert\xi\vert} \bigg)$. From this formula, one could apply the stationary phase argument and obtain a dispersive estimate, similar to the one obtained directly from \eqref{convolution} in the case of the Schrodinger equation. Details of this approach can be found in \cite{bcg} and references therein.

The solution can thus be seen as the sum of two inverse Fourier transforms, each one on a cone $\mathcal{C}_{\pm} = \{ \zeta = (\pm \vert\xi\vert ,\xi),\ \xi \in \R^d \}$. Indeed, this can be rewritten as

$$ u(t,x) = \itg_{\mathcal{C_+}} e^{i (t,x) \cdot \zeta } \ \ \Tilde{\gamma}_+(\zeta)  \ d\mu(\zeta) + \itg_{\mathcal{C_-}} e^{i (t,x) \cdot \zeta }\ \  \Tilde{\gamma}_-(\zeta)  \ d\mu(\zeta), $$
by introducing the pullback of $\gamma$ onto the cone by the canonical projection $( \pm \vert\xi\vert ,\xi) \mapsto \xi$, that is $\Tilde{\gamma}_{\pm}(\pm \vert \xi \vert, \xi) = \gamma_{\pm}(\xi) $. The measure $d\mu(\pm \vert \xi \vert, \xi ) = d\xi$ is as well the pullback of the Lebesgue measure. Written this way, this inverse Fourier transform on the cone is exactly the dual of the restriction operator $f\mapsto \hat{f}\vert_{\mathcal{C}_{\pm}}$ applied to $\Tilde{\gamma}_{\pm}$ (justifying why such inverse Fourier transforms are referred to as extension operators).

This is the connection inciting our resort to restriction theory:
indeed, suppose that we have the following restriction estimate on the cone: 

$$\left\Vert  {\hat v}\vert_{\cont_{\pm}}\  {\vert \xi \vert^{-\frac{1}{2}}} \right\Vert_{L^2(\cont_{\pm},\ d\mu)} \leq C \Vert v \Vert_{L^{p_0}(\R^{d+1})}, $$
where $p_0 = \frac{2(d+1)}{d+3}$. In its dual form, it writes, for $w\in L^2(\cont_{\pm}, d\mu)$:

$$ \left\Vert \mathcal{F}^{-1} (\vert \xi \vert^{-\frac{1}{2}} w \ d\mu) \right\Vert_{L^{p_0'}(\R^{d+1})} \leq C \Vert w \Vert_{L^2(\cont_{\pm}, d\mu)}   .$$
Taking $w = \Tilde{\gamma}_{\pm} \vert \xi \vert^{\frac{1}{2}}$ in the above inequality and using that $$ \left\Vert \Tilde{\gamma}_{\pm} \vert \xi \vert^{\frac{1}{2}} \right\Vert_{L^2(\cont_{\pm}, d\mu)} = \Vert \mathcal{F}^{-1} \gamma_{\pm} \Vert_{\dot{H}^{\frac{1}{2}}(\R^d)} \leq C \left( \Vert f \Vert_{\dot{H}^{\frac{1}{2}}(\R^d)}  + \Vert g \Vert_{\dot{H}^{-\frac{1}{2}}(\R^d)} \right),  $$
to bound the right-hand side, we obtain the Strichartz estimate
$$ \Vert u \Vert_{L^{p_0'}_{t,x}} \leq C \left( \Vert f \Vert_{\dot{H}^{\frac{1}{2}}(\R^d)}  + \Vert g \Vert_{\dot{H}^{-\frac{1}{2}}(\R^d)} \right).$$

Hence, through fairly simple rewriting of the representation formula, the obtention of the Strichartz estimate above reduces to a restriction estimate on a conic surface. Although the surfaces will be significantly more complex, the same reasoning will hold in the study of geophysical fluids, and it is at the core of our approach.

\subsection{Geophysical fluids}

In this article, we study systems of PDEs governing the motion of geophysical fluids. These equations arise naturally as soon as one takes into account the scales relevant to the study of the atmosphere or the ocean: careful modeling choices must be carried out to include adequate forcing terms and dimensionless numbers correctly reflecting the real contribution of each term. Taking the example of the ocean, its behaviour can be described by the incompressible Navier-Stokes equations with additional forces, that is
\begin{equation}\label{ns}
     {\left\{\begin{array}{rcl}
\partial_{t} u + u \cdot \nabla u + \Omega \wedge u  - \nu \Delta u & = & - \frac{1}{\rho}\nabla p - g \\
\nabla \cdot u & = & 0
\end{array}\right.} 
\end{equation}
where $u$ is a 3D vector field standing for the fluid velocity, $p$ is the pressure, $\rho$ the fluid density, and $\nu$ the diffusive parameter. The two exterior forces are the gravitational force and the Coriolis force caused by Earth's rotation, namely $g$ the gravity field, and $\Omega$ the rotation speed of the referential.

And yet, as mentioned above, this universal equation is never the one implemented in practice in the climate models. Indeed, there are many physically meaningful approximations that reduce the complexity of System \eqref{ns}, such as  the hydrostatic approximation which is used in most climate models (see  \cite{pedlosky}). 

Another example is the importance of the Coriolis force. For the time scale of the large oceanic circulation, a characteristic displacement of $10^2$ km takes approximately $10^2$ days at 1cm/s (the characteristic speed of ocean currents), that means that the Earth has rotated a hundred times during this displacement. Therefore, the rotation term can be considered to be a penalisation term, that is to say the contribution of this term is so overwhelming that the solution must lie in the kernel of the associated operator. It leads to a rigorous proof of the Taylor-Proudman theorem (\cite{cdgg}), which was already well-known by physicists (see \cite{pedlosky}).

Roughly speaking, it states that a fast rotating fluid has a horizontal speed independent of the vertical coordinate and no vertical speed. 

Many physical considerations must be taken into account to derive from the Navier-Stokes equation a system more suited to the description of the oceans or the atmosphere at large scales. The penalisation of the rotation term is one of them, but we could also mention the aspect ratio, and the effect of the density governed by the temperature, the pressure for the air and the salinity for the ocean, that will give birth to a strong stratification with different associated effects. In particular, Sections 3 and 4 will be devoted to stratified fluids.

\subsection{Standard method for geophysical fluid equations}

The study of the incompressible Navier-Stokes equations heavily relies on energy estimates, which express the physical reality of conservation of the total energy of the fluid. Weak solutions (in the sense of Leray \cite{leray}) exhibit non-increasing —rather than constant— energy, and such estimates are key to prove existence of Leray solutions. Geophysical fluids typically obey the Navier-Stokes equations with additional terms describing the aforementioned physical effects of rotation and fluid stratification. To illustrate our discussion, consider the simpler example of the Navier-Stokes system for rotating fluids:
\begin{equation}
\label{nsce}
        {\left\{\begin{array}{rcl}
\partial_t u + u \cdot \nabla u - \nu \Delta u + \dfrac{e_3 \wedge u}{\varepsilon}  &=& - \nabla p \\
\nabla \cdot u &=& 0 \\
u(0,\cdot)&=&u_0.
\end{array}\right.}
\end{equation}
 Here, $u$ is the velocity field, $p$ the pressure, and $\frac 1 \varepsilon $ is the rotation speed directed along the $x_3$-axis, appearing in front of the Coriolis force $e_3 \wedge u$. Geophysical terms are skew-symmetric, entailing that they cancel out in energy estimates (see the discussion in \cite{cheminpenal}). On the one hand, this means energy is still conserved so the results on Leray solutions can be easily transposed to the case of geophysical systems. On the other hand, the energy estimates completely fail to capture the additional regularity offered by these terms. The goal of this subsection is to present how geophysical fluids are approached from a mathematical viewpoint and how new and enhanced results can be proved compared to the Navier-Stokes system. 

The usual approach is to get rid of the unknown pressure by applying the Leray projector $\mathbb P$ on divergence-free vector fields, and to treat the non-linear advection $u\cdot \nabla u$ as an exterior bulk force (as it was originally done by Fujita and Kato \cite{fujita_kato} for the Navier-Stokes equation, and in \cite{cdgg} for System \eqref{nsce}. The first equation of \eqref{nsce} becomes: 
\begin{equation}
    \partial_t u - \nu \Delta u + \mathbb P \dfrac{e_3 \wedge u}{\varepsilon} = - \mathbb P u\cdot \nabla u
\end{equation}
Duhamel's formula makes the above equation formally equivalent to the following mild formulation:
\begin{equation}
\label{mild}
u(t) = S (t) u_0 - \itg_0^t S(t-t') \mathbb{P}\ u(t') \cdot \nabla u(t') dt' ,
\end{equation}
where $S(t)$ is the semigroup generated by the linear homogeneous equation:
\begin{equation}
    \partial_t v = \nu \Delta v - \mathbb P \dfrac{e_3 \wedge v}{\varepsilon}.
\end{equation}
The mild formulation \eqref{mild} is used as a fixed point equation which is solved using the contraction mapping principle.

Building the contraction map in a suitable space requires estimates for the homogeneous and inhomogeneous terms in the right-hand side of \eqref{mild}. In the case of Navier-Stokes, $S(t)$ is reduced to the heat semigroup and existence can only be proved locally or for small enough initial conditions. However, in our case of interest, the semigroup involves the additional geophysical terms. The central feature of these terms is their dispersive nature, meaning that different frequency modes travel at different speeds, effectively spreading out and regularizing the solution. In \cite{bmn1}, \cite{bmn2}, \cite{cdgg}, the authors made use of the additional regularity provided by rotation to predict the behaviour of the solutions for high rotation speeds. It was proved that for $\varepsilon$ sufficiently small, the solution of \eqref{nsce} converges towards the solution of the classical two-dimensional Navier-Stokes equation, see Theorems 5.6 and 5.7 in \cite{cdgg}: this corresponds to the aforementioned Taylor-Proudman Theorem.

The inviscid case $\nu = 0$ is of particular interest because $S(t)$ is then only generated by the geophysical terms. In lack of the well-known regularizing behaviour of the heat semigroup, capturing all the regularity provided by the geophysical semigroup is crucial. Koh, Lee and Takada \cite{kohleetakada} have derived sharp Strichartz estimates for the semigroup associated to the Coriolis force alone, which are used to prove long-time existence of solutions provided the rotation is sufficiently fast. In \cite{takada}, an analog result to the Taylor-Proudman Theorem was shown for the inviscid Boussinesq system, another geophysically relevant equation that will be studied in this paper. It is made possible by sharp Strichartz estimates for the semigroup associated to fluid stratification.

The Strichartz estimates for the geophysical semigroups are obtained in the literature by relying on a dispersive inequality
\begin{equation}
\label{disp}
    \Vert S(t)u \Vert_{L^\infty_x} \leq C (1+\vert t \vert )^{-\sigma} \Vert u \Vert_{L^1_x},
\end{equation} for some $\sigma>0$, which is then used in a $TT^*$ argument. The dispersive estimates themselves are obtained by the stationary phase method (see Section 5, Theorem 5.3 from \cite{cdgg}) or by Littman's theorem \cite{littman}. It relies on the expression of the semigroup as an oscillatory integral:
\begin{equation*}
    (S(t)u) (x) = \itg_{\R^3} e^{ix\cdot \xi + it p(\xi)}\hat u (\xi) d\xi,
\end{equation*}
where $p(\xi)$ is the dispersion relation associated to the geophysical term. For the Coriolis force (including the Leray projector), $p(\xi)=\frac{\xi_3}{\vert\xi\vert}$. It is homogeneous of degree $0$, which, by the Littlewood-Paley decomposition, allows one to only consider the localized frequency case $r < \vert \xi \vert < R$. Since $p$ is smooth on this annulus, the Littman theorem directly applies and immediately provides \ref{disp} after computing the rank of the Hessian matrix of $p$. However, for the case of the Boussinesq system, the dispersion relation is $p(\xi)=\frac{\vert \xi_h \vert}{\vert\xi\vert}$ where $\xi_h = (\xi_1,\xi_2)$. It is still of degree 0, but not smooth on the domain $r < \vert \xi \vert < R$, so the stationary phase or Littman's theorem do not apply directly. In \cite{lee_takada} the authors manage to circumvent the lack of smoothness with a careful cut-out of the singularities using dyadic decomposition, leading to quite technical computations.

\emph{In this paper, we prove Strichartz inequalities for geophysical fluids without resorting to a dispersive estimate. Our method instead relies on a Fourier restriction estimate on the surface $\{ \xi, p(\xi) \}$, which by duality provides a Strichartz estimate. The proof of the restriction estimate relies on the slicing method \cite{nicola}, and takes place in the framework of measurable functions, for which the singular sets are of zero Lebesgue measure and are thus easily dealt with. This avoids the technicalities involved by possible lack of smoothness in the otherwise very similar dispersion relations. To the authors' knowledge, restriction theory has never been used before to prove Strichartz estimates related to geophysical fluid equations. From this stems our main result, which is the sharp Strichartz estimate for the primitive system. This new method also allows us to recover Strichartz estimates for the Euler equations in a rotating framework (which were obtained in \cite{kohleetakada} following the standard approach described above) and the inviscid stably stratified Boussinesq system (treated in \cite{lee_takada}, \cite{takada}).}

\subsection{Further results}

In this section, we present the two systems obtained when considering either fluid stratification or rotation alone. The associated semigroups have already been studied (\cite{takada}, \cite{kohleetakada}, \cite{lee_takada}) with dispersive methods. The Strichartz estimates below were thus already shown in the aforementioned articles, and what we wish to highlight is how our restriction theory approach provides a new way to recover these estimates.

The stably stratified inviscid \emph{Boussinesq system} formally corresponds to the primitive system without rotation. However, for physical considerations, it is more commonly written as follows: 
\begin{equation}
    \label{bousec1}
    \left\{\begin{array}{rcl}
\partial_t u + (u \cdot \nabla) u  &=& - \nabla p + \theta e_3 \\
\partial_t \theta + (u\cdot\nabla)\theta  &=& -N^2 u_3\\
\nabla \cdot u &=&0\\
u\vert_{t=0} = u^0& \text{and} & \theta\vert_{t=0}=\theta^0,
 \end{array}\right.
\end{equation} 
by introducing $N>0 $ the Brunt-Väisälä frequency (to exactly recover the primitive system, one can consider the change of variable $\tilde \theta = -N^{-1} \theta $). This system describes the evolution of velocity, pressure, and temperature around a motionless equilibrium state, and its obtention from the general Navier Stokes system with an exterior force is presented in Section 4.1. In the absence of rotation, the system still showcases dispersive phenomena, this time related to the semigroup for stratification alone:
\begin{equation}
\label{op_boussi}
    e^{\pm i t  \frac{\vert D_h \vert}{\vert D \vert}}g :(t,x) \longmapsto \itg_{\R^3} e^{ix\cdot \xi \pm it \frac{\vert\xi_h\vert}{\vert\xi\vert}}\hat g (\xi) d\xi,
\end{equation}
where $\xi_h = (\xi_1, \xi_2)$. Restriction theory yields the following Strichartz inequalities for this semigroup which were already obtained in \cite{lee_takada},\cite{takada} using dispersive estimates:
\begin{theorem}
\label{th:bou}
    There exists a universal constant $C>0$ such that the following Strichartz estimate holds for all $g\in \dot{H}^1 (\R^3)$:
\begin{equation}
\label{esti_bou}
    \Vert e^{\pm i t  \frac{\vert D_h \vert}{\vert D \vert}}g \Vert_{L^6_{t} L^6_{x}} \leq C N^{-\frac{1}{6}}  \Vert g \Vert_{\dot H^1}.
\end{equation}
Moreover, for all $f\in L^1_t\dot H^1$, we have
\begin{equation}
\label{duhamel_bou}
    \left\Vert \itg_0^t e^{\pm i (t-t')  \frac{\vert D_h \vert}{\vert D \vert}}f(t') dt' \right\Vert_{L^6_{t} L^6_{x}} \leq C  N^{-\frac{1}{6}} \Vert f \Vert_{L^1_t\dot H^1_x}.
\end{equation} \end{theorem}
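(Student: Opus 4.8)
The plan is to deduce Theorem~\ref{th:bou} from a Fourier restriction estimate on the graph
\[
\Sigma \;=\; \Big\{\,\Big(\xi,\ \tfrac{|\xi_h|}{|\xi|}\Big)\ :\ \xi\in\R^3\setminus\{0\}\,\Big\}\ \subset\ \R^4,
\]
established by slicing $\Sigma$ into curved hypersurfaces of $\R^3$ on which Theorem~\ref{tomasstein} applies. First I would strip off the scalings. The parameter $N$ enters the semigroup as $e^{\pm itN|D_h|/|D|}$ and $|\xi_h|/|\xi|$ is homogeneous of degree $0$, so the substitution $s=Nt$ gives $\Vert e^{\pm itN|D_h|/|D|}g\Vert_{L^6_tL^6_x}=N^{-1/6}\Vert e^{\pm is|D_h|/|D|}g\Vert_{L^6_sL^6_x}$: it therefore suffices to treat $N=1$ and the factor $N^{-1/6}$ appears for free (I fix the sign $+$, the two signs being exchanged by complex conjugation). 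Again by degree-$0$ homogeneity the estimate $\Vert e^{\pm it|D_h|/|D|}g\Vert_{L^6_{t,x}}\le C\Vert g\Vert_{\dot H^1}$ is invariant under $g\mapsto g(\lambda\,\cdot)$, so after a Littlewood-Paley decomposition it is enough to prove it for $g$ with $\hat g$ supported in the unit annulus $\{\tfrac12\le|\xi|\le2\}$, with a constant independent of the dyadic scale (on such $g$, $\Vert g\Vert_{\dot H^1}\simeq\Vert g\Vert_{L^2}$); the full estimate then follows by inserting a Littlewood-Paley square function in $x$ (the spatial frequency supports of the pieces being finitely overlapping annuli), extracting the $\ell^2$ sum by Minkowski since $6\ge2$, and summing the squared pieces by Plancherel on the frequency side.

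Next I would pass to restriction exactly as for the wave equation in Section~1.3. The frequency-localized $e^{\pm it|D_h|/|D|}g$ is, up to a constant, the Fourier extension operator associated with $\Sigma$ applied to $\hat g$, and $\Vert g\Vert_{\dot H^1}$ is the $L^2$ norm of $\hat g$ against the measure on $\Sigma$ whose density carries the weight $|\xi|^2$; by the $TT^{*}$ argument the desired bound is equivalent to a restriction estimate of the shape $\big\Vert\,|\xi|^{-1}\,\hat v\vert_{\Sigma}\,\big\Vert_{L^2(\Sigma,\,d\mu)}\le C\Vert v\Vert_{L^{6/5}(\R^4)}$. That $|\xi_h|/|\xi|$ fails to be smooth on $\{\xi_h=0\}\cup\{\xi=0\}$ is immaterial here: this set has zero Lebesgue measure in $\R^3$, hence is negligible for an inequality between $L^{6/5}(\R^4)$ and $L^2$ of a measure on $\Sigma$ — precisely the advantage of the restriction formulation over the dispersive one.

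The heart of the proof is the slicing. Using the rotational symmetry of $|\xi_h|/|\xi|$ in the horizontal frequency, I would slice $\Sigma$ along the vertical frequency $\xi_3$: for $\xi_3=\eta\ne0$ the slice is the surface of revolution
\[
\Sigma_\eta \;=\; \Big\{\,\Big(\xi_h,\ \tfrac{|\xi_h|}{\sqrt{|\xi_h|^2+\eta^2}}\Big)\ :\ \xi_h\in\R^2\,\Big\}\ \subset\ \R^3,
\]
which has nowhere-vanishing Gaussian curvature away from the conical point $\xi_h=0$ — this is where all the curvature of $\Sigma$ transverse to its flat radial direction sits. To each slice I would apply the weighted form of the Tomas-Stein theorem adapted to non-compact surfaces (with the curvature weight $|K_\eta|^{1/4}$, absorbing both the non-compactness and the degeneration of the curvature of $\Sigma_\eta$), obtaining for each $\eta$ an extension bound $L^2\big(\Sigma_\eta,\,|K_\eta|^{1/4}d\sigma_\eta\big)\to L^6(\R^3_{x_h,t})$ for the partial extension operator obtained by freezing $\xi_3=\eta$; the whole oscillation in $t$ is retained here, since the curvature of $\Sigma_\eta$ in the $\tau$-direction is exactly what Tomas-Stein feeds on, and the exponent $6$ is admissible because $6\ge\tfrac{2(3+1)}{3-1}=4$, the choice of $6$ over the endpoint being what forces a diagonal $L^6_{t,x}$ on the left. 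I would then recombine the slices: $e^{\pm it|D_h|/|D|}g$ is the inverse Fourier transform in $\xi_3$ of these partial extension operators, so Hausdorff-Young in $(x_3,\xi_3)$ turns $L^{6/5}_{\xi_3}$ into $L^6_{x_3}$, Minkowski's integral inequality moves that $L^{6/5}_{\xi_3}$ norm outside (licit since $6\ge6/5$), and Hölder in $\xi_3$ over the finite range $|\xi_3|\lesssim1$ closes the $\xi_3$-integration. The exponents then leave no room: the Tomas-Stein index $4$, the split $\R^4=\R^3\times\R$, the weight $|K_\eta|^{1/4}$, and the Jacobians of the cylindrical change of variables together with the $|\xi|^2$ coming from $d\xi$ fit together to give $\Vert e^{\pm it|D_h|/|D|}g\Vert_{L^6_{t,x}}\lesssim\Vert g\Vert_{\dot H^1}$ with no slack — which is the sense in which the slicing captures all available curvature and why the result is sharp (Proposition~\ref{prop:sharp}). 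The inhomogeneous bound \eqref{duhamel_bou} then follows from \eqref{esti_bou} by Minkowski's integral inequality alone, the forcing being in $L^1_t$: by time-translation invariance of the semigroup,
\[
\Big\Vert \int_0^t e^{\pm i(t-t')|D_h|/|D|}f(t')\,dt'\Big\Vert_{L^6_tL^6_x}\;\le\;\int_{\R}\big\Vert e^{\pm i(t-t')|D_h|/|D|}f(t')\big\Vert_{L^6_tL^6_x}\,dt'\;\le\;C\int_{\R}\Vert f(t')\Vert_{\dot H^1}\,dt',
\]
so no Christ-Kiselev-type argument is required.

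The hard part will be the slicing step, for two reasons. First, the curvature of $\Sigma_\eta$ degenerates as $\eta\to0$ — $\Sigma$ flattens near the equatorial set $\{\xi_3=0\}$, where $|\xi_h|/|\xi|\approx1$ — and $\Sigma_\eta$ is in addition singular along $\xi_h=0$; one must run weighted Tomas-Stein with exactly the curvature weight $|K_\eta|^{1/4}$ so that the per-slice constant stays bounded, and then check that the power of $\eta$ it produces still leaves the $\xi_3$-integral convergent on $|\xi_3|\lesssim1$ (this is the subtlety that the dispersive approach of \cite{lee_takada} had to handle by an intricate dyadic cut-out of the singularities). Second, the recombination must lose nothing: Hausdorff-Young, Minkowski and Hölder each have to be applied at exactly the exponent dictated by scaling, so that one lands on the sharp pair $(L^6_{t,x},\dot H^1)$ rather than on a weaker, non-scale-invariant estimate.
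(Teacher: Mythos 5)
Your reduction to a restriction estimate on the graph $\Sigma$, the $N$-scaling, and the treatment of the inhomogeneous estimate by Minkowski with time-translation invariance are all correct (and for the last point, actually somewhat slicker than the paper's Lemma 5). But the core of your proposal — the slicing — goes in a genuinely different direction from the paper's, and as written it does not close.

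The paper slices $\Sigma$ along the level sets of the phase $\alpha=|\xi_h|/|\xi|$, that is, along the \emph{temporal} frequency coordinate. For fixed $\alpha$, the slice in the spatial frequency variables is a cone of opening $\rho=\sqrt{1-\alpha^2}/\alpha$, so the self-similar cone restriction theorem of Section 2 applies with an explicit $\rho^{-1/6}$ constant. After Minkowski, the remaining integral in $\alpha$ carries the weight $\alpha^{1/3}(1-\alpha^2)^{-2/3}$, and near the dangerous endpoint $\alpha=1$ (i.e.\ $\xi_3\to 0$) the exponent $-2/3$ is \emph{exactly} the homogeneous Sobolev weight for $\dot H^{-1/3}(\R)$, so the dual embedding $L^{6/5}(\R)\hookrightarrow\dot H^{-1/3}(\R)$ closes the argument sharply. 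You instead slice along the spatial frequency $\xi_3=\eta$. The slices $\Sigma_\eta=\{(\xi_h,|\xi_h|/\sqrt{|\xi_h|^2+\eta^2})\}$ are smoothed cones, not self-similar, non-compact, and their Gaussian curvature degenerates: on the dyadic annulus $|\xi|\sim 1$, a direct computation gives $|K_\eta|\sim\eta^4$. Invoking a weighted (affine) Tomas--Stein theorem on these surfaces is a far heavier tool than the elementary cone restriction the paper uses, and it is not established in your proposal; you acknowledge this is ``the hard part'' but do not carry it out.

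More seriously, the numerology does not appear to work out even granting the per-slice estimate. The affine restriction estimate with weight $|K_\eta|^{1/4}d\sigma$ has a uniform constant, but when you convert back to the natural measure on $\Sigma_\eta$ you inherit the factor $|K_\eta|^{-1/4}\sim\eta^{-1}$ in the $L^2$ norm of $\hat g(\cdot,\eta)$. Even interpolating the endpoint $L^4$ extension estimate (constant $\sim|K_\eta|^{-1/8}\sim\eta^{-1/2}$) with the trivial $L^\infty$ bound to reach $L^6$, the per-slice extension constant is of order $\eta^{-1/3}$. Your recombination requires, after Hausdorff--Young and Minkowski, a H\"older step that puts this weight into $L^3_\eta$ near $\eta=0$; but $\int_0\eta^{-1}\,d\eta$ diverges, so the H\"older closure fails at least logarithmically (and polynomially with the $\eta^{-1/2}$ exponent you implicitly assume). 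In short, slicing transversally to the temporal frequency hides the degeneration at $\xi_3=0$ inside a non-integrable weight, whereas the paper's choice of slicing direction converts the same degeneration into precisely the Sobolev weight matching $L^{6/5}\hookrightarrow\dot H^{-1/3}$. This is the missing idea, and it is the reason the Littlewood--Paley localization you introduce at the start cannot rescue the argument: the obstruction sits at $\xi_3=0$ within every dyadic shell.
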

The homogeneous estimate is sharp, because the same scaling condition on the exponents \eqref{scaling} holds for the Boussinesq equations. This result, which is the equivalent of Proposition \ref{prop:sharp}, has been shown in \cite{lee_takada}.

Secondly, we consider inviscid fluids subject only to the Coriolis force, which yields the \emph{Euler equations in a rotational framework}:
\begin{equation}
\label{eulertourne}
      {\left\{\begin{array}{rcl}
\partial_t u + u\cdot \nabla u + \dfrac{e_3 \wedge u}{\varepsilon} + \nabla p &=& 0 \\
\nabla \cdot u &=& 0 \\
u(0,\cdot)&=&u_0.
\end{array}\right.}
\end{equation}
The dispersion is provided by the semigroup associated to the fluid rotation alone:
\begin{equation}
\label{op_rot}
    e^{\pm i t  \frac{D_3}{\vert D \vert}}g :(t,x) \longmapsto \itg_{\R^3} e^{ix\cdot \xi \pm it \frac{\xi_3}{\vert\xi\vert}}\hat g (\xi) d\xi.
\end{equation}
Restriction theory provides the following Strichartz estimates:
\begin{theorem}
\label{th:rot}
    There exists a universal constant $C>0$ such that the following Strichartz estimate holds for all $g\in \dot{H}^1 (\R^3)$:
\begin{equation}
\label{esti_eul}
    \Vert e^{\pm i t  \frac{ D_3}{\vert D \vert}}g \Vert_{L^6_{t} L^6_{x}} \leq C \varepsilon^{\frac{1}{6}}  \Vert g \Vert_{\dot H^1}.
\end{equation}
Moreover, for all $f\in L^1_t\dot H^1$, we have
\begin{equation}
\label{duhamel_euler}
    \left\Vert \itg_0^t e^{\pm i (t-t')  \frac{ D_3}{\vert D \vert}}f(t') dt' \right\Vert_{L^6_{t} L^6_{x}} \leq C \varepsilon^{\frac{1}{6}} \Vert f \Vert_{L^1_t\dot H_x^1}.
\end{equation} 
\end{theorem}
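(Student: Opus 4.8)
The plan is to run the Fourier-restriction argument of Theorems~\ref{th:pri} and~\ref{th:bou}, which takes its simplest form here because the symbol $p(\xi)=\xi_3/|\xi|$ is smooth on $\R^3\setminus\{0\}$. The semigroup attached to the fast-rotating Euler system \eqref{eulertourne} is $e^{\pm\tfrac{i}{\varepsilon}t\,p(D)}$, and since $p$ is homogeneous of degree $0$ one has $e^{\pm\tfrac{i}{\varepsilon}t\,p(D)}g=\big(e^{\pm it\,p(D)}g\big)(t/\varepsilon,\cdot)$; a dilation in time thus reduces matters to the case $\varepsilon=1$ at the cost of a factor $\varepsilon^{1/6}$, so it is enough to prove $\Vert e^{\pm it\,p(D)}g\Vert_{L^6_tL^6_x}\le C\Vert g\Vert_{\dot H^1}$ with a universal constant. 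Granting this for both signs and all times, the inhomogeneous bound \eqref{duhamel_euler} follows at once from Minkowski's integral inequality,
\[
\Big\Vert \int_0^t e^{\pm\tfrac{i}{\varepsilon}(t-t')p(D)}f(t')\,dt'\Big\Vert_{L^6_tL^6_x}\le \int_{\R}\big\Vert e^{\pm\tfrac{i}{\varepsilon}s\,p(D)}f(t')\big\Vert_{L^6_sL^6_x}\,dt'\le C\,\varepsilon^{1/6}\,\Vert f\Vert_{L^1_t\dot H^1_x}.
\]

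For the homogeneous estimate I would, following the reduction of Strichartz estimates to Fourier restriction explained in the introduction, view $u=e^{\pm it\,p(D)}g$ as an extension (adjoint-restriction) operator on the surface
\[
\Sigma=\big\{\big(\xi,\pm\xi_3/|\xi|\big):\xi\in\R^3\big\}\subset\R^4,
\]
taken with the push-forward of Lebesgue measure on $\R^3$ and with the weight $|\xi|^{-1}$ placed inside the $L^2(\Sigma)$-norm of the restriction estimate — the exponent of this weight being forced by the scaling relation \eqref{scaling}, which requires $s=1$ when $q=6$. Through the $TT^*$ mechanism, the target estimate becomes the dual restriction bound $\big\Vert \mathcal F^{-1}\big(|\xi|^{-1}w\,d\mu\big)\big\Vert_{L^6_{t,x}(\R^4)}\le C\Vert w\Vert_{L^2(\Sigma,\,d\mu)}$ for the corresponding measure $d\mu$.

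The core of the proof is this restriction estimate, which I would establish by the slicing method of~\cite{nicola}. Using the $0$-homogeneity, pass to polar coordinates $\xi=\rho\,\omega$ ($\rho>0$, $\omega\in\mathbb S^2$); the identity $\rho\,x\cdot\omega\pm t\,\omega_3=(\rho x_1,\rho x_2,\rho x_3\pm t)\cdot\omega$ turns $u$ into the one-parameter superposition $u(t,x)=\int_0^\infty\rho^{2}\big(E_{\mathbb S^2}[\hat g(\rho\,\cdot)]\big)(\rho x_1,\rho x_2,\rho x_3\pm t)\,d\rho$, where $E_{\mathbb S^2}$ denotes the extension operator of the unit sphere $\mathbb S^2\subset\R^3$. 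Each slice is therefore, up to the linear change of space-time variables $(t,x)\mapsto(\rho x_1,\rho x_2,\rho x_3\pm t)$, exactly a spherical extension in $\R^3$, to which the Tomas-Stein theorem (Theorem~\ref{tomasstein} with $d=3$, in its dual form) applies with a constant independent of $\rho$; the exponent $q=6$ is admissible since $6\ge\tfrac{2(d+1)}{d-1}=4$. It then remains to recombine the slices, which is the technical heart of~\cite{nicola}: for fixed $x$ the $\rho$-integral is a one-dimensional Fourier integral in $t$ whose frequency is confined to $[-1,1]$ (the range of $p$), so one uses Hausdorff-Young in $t$ (admissible since $6\ge2$), then Minkowski's integral inequality in the remaining variables (admissible since $6\ge6/5$), and finally the uniform Tomas-Stein estimate slice by slice; after tracking the Jacobian of polar coordinates, the weight $|\xi|^{-1}$, and the dilation in the slice-wise change of variables, the quantity that survives is exactly $\Vert g\Vert_{\dot H^1}$.

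The main obstacle is this recombination. By construction each individual slice is constant along one space-time direction — the one coupling the radius $\rho$ with $x_3\pm t$ — so it fails to lie in $L^6(\R^4)$ in isolation, and a naive triangle inequality in $\rho$ is hopeless; the whole point of the slicing method is that the $\rho$-integration is performed inside the temporal Fourier transform, before any space-time norm is taken. Equivalently, the surface $\Sigma$ does not have everywhere non-vanishing Gaussian curvature — it degenerates along the equatorial set $\{\xi_3=0\}$ and at the poles $\xi_h=0$ — so the fully-curved restriction estimate is not available on $\Sigma$ itself, and the slicing recovers only the estimate compatible with the curvature that survives, consistent with the scaling constraints \eqref{scaling} that fix the exponent at $6$. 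Carrying out the weight bookkeeping so that the right-hand side is exactly $\Vert g\Vert_{\dot H^1}$ with the claimed power $\varepsilon^{1/6}$ is then the only genuinely computational point, and it is strictly simpler here than for the Boussinesq symbol $|\xi_h|/|\xi|$, since $\xi_3/|\xi|\in C^\infty(\R^3\setminus\{0\})$ and no dyadic excision of singular directions is needed.
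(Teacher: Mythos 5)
Your reduction to $\varepsilon=1$, the $TT^*$ reformulation, and the Minkowski argument for the inhomogeneous estimate \eqref{duhamel_euler} are all fine (the latter is a valid, slightly different route from the paper, which instead uses the duality argument of Lemma~5 in the Appendix). The gap is in the homogeneous estimate, at the point you yourself flag as "the main obstacle": the sphere decomposition you set up is incompatible with the recombination you then describe. For the wave cone $\{(\xi,|\xi|)\}$ the sphere slices at $|\xi|=\rho$ sit at constant last coordinate $\rho$, which is why partial Fourier analysis in $t$ (Hausdorff--Young/Minkowski plus the one-dimensional Sobolev embedding) cleanly reassembles them; here the last coordinate of $\M=\{(\xi,\xi_3/|\xi|)\}$ is $\mu=\xi_3/|\xi|$, not $|\xi|$, so the sphere at $|\xi|=\rho$ spreads over the entire range $\mu\in[-1,1]$. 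After Hausdorff--Young in $t$ one is necessarily working at a fixed time-frequency $\mu$, and the level set $\{\xi:\xi_3/|\xi|=\mu\}$ is a \emph{cone}, not a sphere. The paper therefore slices $\M$ by $\mu$ into cones, applies the cone-restriction estimate in $\R^3$ (Corollary~1, itself proved by slicing cones into circles $\mathbb S^1\subset\R^2$), and only then runs Minkowski plus the embedding $L^{6/5}(\R)\hookrightarrow\dot H^{-1/3}(\R)$; the cone-restriction corollary is the piece that is entirely missing from your sketch.

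There is also a numerology problem that signals the same issue. Tomas--Stein for $\mathbb S^2\subset\R^3$ has endpoint $p_0=\tfrac{2(d+1)}{d+3}=\tfrac43$ (dual exponent $4$), whereas the estimate you need at each level to land on $L^{6/5}(\R^4)$ after the $\mu$-integration is the $p_0=\tfrac65$ (dual $6$) cone restriction in $\R^3$. Merely observing that $6\ge 4$ does not help: the Lemma~1--style scaling factor in the slice radius is tied to the sharp exponent, and it is precisely that factor which cancels against the Jacobian of $\mu\mapsto\xi_3$ and produces the Sobolev weight $\vert\mu\vert^{-1/3}(1-\mu^2)^{-1/3}$ in the paper's proof of Lemma~4. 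Using a suboptimal exponent on the slice destroys this balance. In short: write $\Vert\hat f\vert_\M\Vert_{L^2(\M,d\sigma)}^2$ as an integral in $\mu$ over cones via the change of variables $\xi_3\mapsto\mu=\xi_3/|\xi|$, invoke Corollary~1 (not spherical Tomas--Stein) at each level, then Minkowski and the $\dot H^{-1/3}$ embedding; the blunt bound $|\mu|^{-1/3}\le|\mu|^{-2/3}$ near the three singularities $\mu\in\{-1,0,1\}$ then gives \eqref{esti_eul}.
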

The estimate is not sharp in this case, as it was shown in \cite{kohleetakada} that the optimal range of exponents in this case is $\frac{1}{q}+\frac{1}{r}\leq \frac{1}{2}$.
The lack of sharpness in this particular case can be easily tracked down in our method and is discussed in Section 5.3. We will explain how to obtain, still resorting to restriction theory, the improved estimate
    $$ \Vert e^{\pm i t  \frac{ D_3}{\vert D \vert}}g \Vert_{L^6_x L^3_t} \leq C \varepsilon^{\frac 1  3 } \Vert g \Vert_{\dot{H}^1}. $$
in which the exponents are the optimal ones but the time and space norms are reversed in comparison to usual Strichartz estimates.

\begin{Remark} 
Evolution PDEs are usually thought of as being solved for $t \in (0, \infty )$, from an initial condition at time $t=0$. However, the equations at play here are time reversible, meaning they admit solutions for $t\in \R$. Therefore the Lebesgue norms in time in our estimates can be taken over $\R$. This is because of the absence of viscous forces, usually exploited for its additional decay of the solutions through the well-known regularizing properties of the heat semigroup (as it is considered in \cite{charve}, \cite{cdgg}, \cite{kohleetakada} and \cite{lee_takada}). 
\end{Remark}

\begin{Remark}
    We consider inviscid fluids throughout this paper, but the previous systems could all write with additional viscous forces $-\nu \Delta u$ and heat diffusion $-\kappa \Delta \theta$ in the left-hand side of equations. These terms are proportional to the identity in Fourier space (provided $\nu = \kappa$, an assumption often made), which makes them commute with the rotation and stratification. Hence, they can be treated separately and our results can be extended to the viscous case, which is in general easier thanks to the regularizing behavior of the heat semi-group.
\end{Remark}

\subsection{Layout and Notations}

\textbf{Layout. }The article is organized as follows. In Section 2, we illustrate the general method of proof for restriction theorems on surfaces of $\R^d$ by slicing them into surfaces in $\R^{d-1}$, by deriving the cone restriction theorem from the Tomas-Stein theorem applied to its spherical slices, a method initially developed in \cite{nicola}. In Section 3, we prove Theorem \ref{th:pri} on the primitive equations. Theorem \ref{th:bou} on the Boussinesq system is proved in Section 4, and Theorem \ref{th:rot} on the Euler equations in a rotational framework in Section 5. Finally, we prove in Appendix how to obtain the inhomogeneous estimates  (resp. \eqref{duhamel_pri}, \eqref{duhamel_bou} and \eqref{duhamel_euler}) from the homogeneous ones (resp. \eqref{esti_pri}, \eqref{esti_bou} and \eqref{esti_eul}).

\textbf{Notations}. Throughout this article, we will denote by $d\sigma_R$ the surface measure on the sphere $\Spd_R\subset \R^d$ of radius $R$, defined by
$$ d\sigma_R(A) = d\lambda(\{ta: a\in A, 0< t \leq 1\})$$
for any measurable subset $A \subset\Spd_R$, with $d\lambda$ being the Lebesgue measure on $\R^d$.

If $p\in [1,+\infty]$, its Lebesgue dual exponent will be denoted $p'$, such that $\frac 1 p + \frac{1}{p'} = 1$.

The Fourier transform of any function $f$ will be denoted $\hat f$, and the inverse Fourier transform will be written $\mathcal{F}^{-1}$. Moreover, the partial Fourier transform of $f$ in the last coordinate will be denoted $\Grave{f}(x,\rho)$, such that:
$$\Grave{f} (x,\rho) = \int_{\R}e^{-ix_{d+1}\rho}\,f(x,x_{d+1})\,dx_{d+1}.$$

 Finally, in the computations, $C$ will be a positive constant independent of the physical parameters, that may vary from line to line.

\section{On the slicing method}

The techniques developed in the literature to prove restriction estimates often make use of subtle harmonic analysis arguments as the surfaces become increasingly complex. In particular, curvature has been understood to play a key role, as lack of curvature always entails a loss in the exponents of the estimates: it can be  seen through the simple fact that restriction estimates for surfaces with vanishing Gaussian curvature are out of reach of the usual Tomas-Stein theorem. However, in explicit cases, one can sometimes identify a $d$-dimensional surface of interest as a combination of $(d-1)$-dimensional surfaces with better properties. A canonical example is the relation between the cone in $\R^{d+1}$ and the sphere in $\R^d$ \cite{nicola}: horizontally slicing the cone yields spheres of increasing radius, and moreover those spheres encompass all the curvature of the initial cone, so one does not expect additional contribution from the vertical stacking of the spheres. Indeed, the numerology agrees and the sharp results for the cone can easily be retrieved by slicing it in spheres and applying the Tomas Stein theorem. To the authors' knowledge, this slicing method was only formally used to show the connection between the sphere and the cone quite recently \cite{nicola}. Since our framework for the slicing method is a bit different, and because it is central to this paper, we showcase the cone restriction theory and its proof below.

The restriction estimate for the cone is the following:
\begin{theorem}[\cite{stri}]
Let $d\geq 2$ and consider the cone $\cont = \{ (\xi, \vert \xi \vert ),\ \xi \in \R^d \}\subset \R^{d+1}$ endowed with surface measure $d\sigma(\xi,|\xi|) = \frac{d\xi}{|\xi|}$. For $p_0= \frac{2(d+1)}{d+3}$, there exists a constant $C>0$ such that for any function $f\in \s(\R^{d+1})$:
$$ \Vert \hat{f}|_{\cont} \Vert_{L^2(\cont, d\sigma)} \leq C \Vert f \Vert_{L^{p_0}(\R^{d+1})}. $$
\end{theorem}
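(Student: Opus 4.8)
The plan is to slice the cone horizontally: the slice of $\cont$ at height $t>0$ is the sphere $\Spd_t\subset\R^d$ of radius $t$, and the restriction estimate for $\cont$ will be deduced by applying the Tomas--Stein theorem (Theorem~\ref{tomasstein}, which applies to the \emph{compact}, curved hypersurface $\Spd_t$ as soon as $d\ge 2$) to each of these slices. The bridge between the two is the partial Fourier transform in the last variable: writing a point of $\R^{d+1}$ as $(x,x_{d+1})$ with $x\in\R^d$, and setting
\[
g_t(x):=\itg_{\R}e^{-ix_{d+1}t}\,f(x,x_{d+1})\,dx_{d+1},
\]
one has, for $f\in\s(\R^{d+1})$, the identity $\hat f(\xi,|\xi|)=\widehat{g_{|\xi|}}(\xi)$, where the Fourier transform on the right is the one on $\R^d$. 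Thus for each fixed $t$, the function $\omega\mapsto\hat f(t\omega,t)$ on $\Spd$ is exactly the restriction to $\Spd_t$ of $\widehat{g_t}$.

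Carrying this out, I would first pass to polar coordinates $\xi=r\omega$ in $\R^d$: since $d\sigma(\xi,|\xi|)=d\xi/|\xi|$ and $d\xi/|\xi|=r^{d-2}\,dr\,d\omega$,
\[
\big\|\hat f|_{\cont}\big\|_{L^2(\cont,d\sigma)}^2=\itg_{\R^d}|\hat f(\xi,|\xi|)|^2\,\frac{d\xi}{|\xi|}=\itg_0^{\infty}r^{d-2}\itg_{\Spd}|\widehat{g_r}(r\omega)|^2\,d\omega\,dr.
\]
For fixed $r>0$, rescaling to the unit sphere (via $h_r(x)=g_r(x/r)$) and applying Theorem~\ref{tomasstein} on $\Spd\subset\R^d$, whose exponent is precisely $p_0=\frac{2(d+1)}{d+3}$, gives $\itg_{\Spd}|\widehat{g_r}(r\omega)|^2\,d\omega\le C\,r^{2d/p_0-2d}\|g_r\|_{L^{p_0}(\R^d)}^2$. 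Multiplying by $r^{d-2}$, the surviving power of $r$ is $d-2+\frac{2d}{p_0}-2d=-\frac{2}{d+1}$, so
\[
\big\|\hat f|_{\cont}\big\|_{L^2(\cont,d\sigma)}^2\le C\itg_0^{\infty}r^{-\frac{2}{d+1}}\|g_r\|_{L^{p_0}(\R^d)}^2\,dr.
\]

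It remains to bound the last integral by $\|f\|_{L^{p_0}(\R^{d+1})}^2$. I would use Minkowski's integral inequality — here in the favourable direction, because $p_0\le2$ — to move the $L^2_r$ norm inside the $L^{p_0}_x$ norm, obtaining the bound $\big\|\,\big(\itg_0^{\infty}r^{-\frac{2}{d+1}}|g_r(x)|^2\,dr\big)^{1/2}\,\big\|_{L^{p_0}_x}^2$. For each fixed $x$, enlarging the $r$-integral to all of $\R$ and using Plancherel in $x_{d+1}$ identifies $\itg_{\R}|\rho|^{-\frac{2}{d+1}}|g_\rho(x)|^2\,d\rho$ with a constant times $\big\||D_{x_{d+1}}|^{-1/(d+1)}f(x,\cdot)\big\|_{L^2(\R)}^2$, which the one–dimensional Hardy--Littlewood--Sobolev inequality bounds by $C\|f(x,\cdot)\|_{L^{p_0}(\R)}^2$ — admissible precisely because $\frac{1}{p_0}=\frac12+\frac{1}{d+1}$. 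Taking the $L^{p_0}_x$ norm (i.e. integrating the $p_0/2$-th power in $x$) then produces $\|f\|_{L^{p_0}(\R^{d+1})}^2$, which is the desired estimate.

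I do not expect any single step to be a serious technical obstacle; the real content — the ``miracle'' that makes the slicing work — is that the numerology closes with the \emph{same} exponent $p_0=\frac{2(d+1)}{d+3}$ in two different places: once when Tomas--Stein is applied to the spherical slices, and once when a fractional integration in the vertical frequency is used to absorb the leftover weight $r^{-2/(d+1)}$. It is also worth keeping in mind that the argument forces $d\ge2$, both so that $\Spd$ has non-vanishing curvature and so that $1<p_0<2$ (the latter being exactly what makes the direction of Minkowski's inequality the favourable one), in agreement with the hypothesis.
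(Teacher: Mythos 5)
Your proposal is correct and follows essentially the same route as the paper: polar slicing of the cone into spheres, rescaled Tomas--Stein on each slice producing the weight $r^{-2/(d+1)}$, Minkowski to exchange the $r$- and $x$-integrals (using $p_0\le 2$), identification of the inner $r$-integral as a homogeneous Sobolev norm $\Vert f(x,\cdot)\Vert_{\dot H^{-1/(d+1)}(\R)}^2$, and the dual one-dimensional Sobolev embedding $L^{p_0}(\R)\hookrightarrow\dot H^{-1/(d+1)}(\R)$ (which is your Hardy--Littlewood--Sobolev step). The only presentational differences are that the paper packages the rescaled Tomas--Stein estimate as Lemma~1 rather than deriving it inline, and phrases the final step as a Sobolev embedding rather than HLS; your closing observation that the same $p_0$ shows up twice is exactly the sharpness remark the paper makes after the proof.
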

As stated above, the proof by the slicing method requires an extension of Theorem 1 to spheres of varying radius, which is given by the following lemma. Its proof is a simple scaling argument left to the reader.

\begin{lemma}
Let $\Spd_R$ be the sphere of radius $R>0$ included in $\R^d$, endowed with the surface measure $d\sigma_R$ (not normalized, so that $d\sigma_R(\Spd_R) \propto R^{d-1}$). Then there exists a constant $C>0$ (independent of R) such that for any function $f\in \s(\R^{d})$, one has:
$$ \Vert \hat{f}|_{\Spd_R} \Vert_{L^2(\Spd_R, d\sigma_R)} \leq C R^{\alpha} \Vert f \Vert_{L^{p_0}(\R^{d})},$$
with  $\alpha =  \frac{1}{p_0'} = \frac{d-1}{2(d+1)}$.
\end{lemma}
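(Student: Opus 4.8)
The statement is purely a scaling consequence of the Tomas–Stein theorem (Theorem \ref{tomasstein}) applied to the unit sphere $\Spd$, which has non-vanishing Gaussian curvature everywhere. The plan is to rescale $\Spd_R$ to $\Spd$ and track the powers of $R$ produced in each of the two norms. Given $f\in\s(\R^d)$ and $R>0$, I would introduce the dilate $g:=f(\cdot/R)$, whose Fourier transform is $\hat g(\xi)=R^d\hat f(R\xi)$; hence $\hat g|_{\Spd}(\omega)=R^d\hat f(R\omega)$, where $R\omega$ ranges over $\Spd_R$ as $\omega$ ranges over $\Spd$, so this converts a restriction to $\Spd_R$ into a restriction to the unit sphere. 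On the physical side, the change of variables $y=x/R$ gives $\Vert g\Vert_{L^{p_0}(\R^d)}=R^{d/p_0}\Vert f\Vert_{L^{p_0}(\R^d)}$.

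Next I would rewrite the left-hand side in terms of $g$. Under the dilation $\omega\mapsto R\omega$ the $(d-1)$-dimensional surface measure scales with Jacobian $R^{d-1}$, i.e. $\int_{\Spd_R}h\,d\sigma_R=R^{d-1}\int_{\Spd}h(R\omega)\,d\sigma_1(\omega)$ (this is exactly consistent with the normalization $d\sigma_R(\Spd_R)\propto R^{d-1}$ recorded in the statement). Applying this with $h=|\hat f|^2$ and using $\hat f(R\omega)=R^{-d}\hat g(\omega)$ yields
\[
\Vert \hat f|_{\Spd_R}\Vert_{L^2(\Spd_R,d\sigma_R)}^2=R^{d-1}\int_{\Spd}|\hat f(R\omega)|^2\,d\sigma_1(\omega)=R^{-d-1}\,\Vert \hat g|_{\Spd}\Vert_{L^2(\Spd,d\sigma_1)}^2 ,
\]
so that $\Vert \hat f|_{\Spd_R}\Vert_{L^2(d\sigma_R)}=R^{-(d+1)/2}\,\Vert \hat g|_{\Spd}\Vert_{L^2(d\sigma_1)}$.

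It then remains to invoke Theorem \ref{tomasstein} with $S=\Spd$, giving $\Vert\hat g|_{\Spd}\Vert_{L^2(d\sigma_1)}\le C\Vert g\Vert_{L^{p_0}}$ with $C$ the (universal, $R$-independent) Tomas–Stein constant for the unit sphere. Combining the three relations gives
\[
\Vert \hat f|_{\Spd_R}\Vert_{L^2(d\sigma_R)}\le C\,R^{-(d+1)/2}R^{d/p_0}\Vert f\Vert_{L^{p_0}} ,
\]
which is the claimed bound with $\alpha=\tfrac{d}{p_0}-\tfrac{d+1}{2}$. Finally one checks the numerology: since $p_0=\tfrac{2(d+1)}{d+3}$ we have $\tfrac{d}{p_0}=\tfrac{d(d+3)}{2(d+1)}$, hence $\alpha=\tfrac{d(d+3)-(d+1)^2}{2(d+1)}=\tfrac{d-1}{2(d+1)}=1-\tfrac1{p_0}=\tfrac1{p_0'}$. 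There is no real obstacle here; the only point demanding care is matching the $R^{d-1}$ scaling of the surface measure to the paper's convention, since an error in that exponent propagates directly into $\alpha$.
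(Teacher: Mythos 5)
Your proof is correct and is exactly the "simple scaling argument" the paper leaves to the reader: dilate by $R$, track the $R^{d/p_0}$ gain in the $L^{p_0}$ norm and the $R^{-(d+1)/2}$ loss in the $L^2(d\sigma_R)$ norm, and check that the numerology matches $\alpha = 1/p_0'$. The computation is clean and the exponent bookkeeping (in particular the $R^{d-1}$ scaling of the surface measure, consistent with the normalization stated in the lemma) is handled properly.
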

The slicing method will make use of this lemma to prove Theorem 5. As stated above, this proof is slightly different than what was done in \cite{nicola} because we directly prove the restriction estimate rather than its dual extension estimate.

\begin{proof}[Theorem 5]
Consider $ f\in\s(\R^{d+1}) $. The first step to prove the theorem stated above is to write $\Vert \hat{f}|_{\cont} \Vert_{L^2(\cont, d\sigma)}$ as an integral over spheres, using polar coordinates. We have:
\begin{align*}
    \Vert \hat{f}|_{\cont} \Vert^2_{L^2(\cont, d\sigma)}& = \itg_{\R^d} \big\vert\hat{f}(\xi,|\xi|) \big\vert^2 \frac{d\xi}{|\xi|}\\
    &= \itg_0^{+\infty} \itg_{\Spd_{\rho}} \vert \hat{f}(\xi,\rho) \vert^2 d\sigma_{\rho}(\xi) \frac{d\rho}{\rho},
\end{align*}
with $\Spd_{\rho}$ the sphere of radius $\rho$, and $d\sigma_{\rho}$ the measure on $\Spd_{\rho}$ as in Lemma 1.

Now, we want to use the Tomas-Stein theorem on the sphere of radius $\rho$, that is to say apply Lemma 1. It yields:

$$\itg_0^{+\infty} \itg_{\Spd_{\rho}} \vert \hat{f}(\xi,\rho) \vert^2 d\sigma_{\rho}(\xi) \frac{d\rho}{\rho}
    \leq C \itg_0^{+\infty} \bigg( \itg_{\R^d} \vert \Grave{f}(x,\rho) \vert^{p_0} dx \bigg)^{\frac{2}{p_0}} \frac{d\rho}{\rho^{1-2\alpha}},$$
with $\Grave{f}(x,\rho)$ the partial Fourier transform of $f$ in the last coordinate, that is:
\begin{equation}
    \label{fgrave}
    \Grave{f} (x,\rho) = \int_{\R}e^{-ix_{d+1}\rho}\,f(x,x_{d+1})\,dx_{d+1}.
\end{equation}
At this point, since $p_0 \leq 2$, we can use the Minkowski inequality: 
$$  \itg_0^{+\infty} \bigg( \itg_{\R^d} \vert \Grave{f}(x,\rho) \vert^{p_0} dx \bigg)^{\frac{2}{p_0}} \frac{d\rho}{\rho^{1-2\alpha}} 
\leq C \bigg(\itg_{\R^d} \bigg( \itg_0^{+\infty} \vert \Grave{f}(x,\rho) \vert^2 \frac{d\rho}{\rho^{1-2\alpha}} \bigg)^{\frac{p_0}{2}} dx \bigg)^{\frac{2}{p_0}} .$$
We recognize on the right-hand side a piece of the homogeneous Sobolev norm of $f(x,\cdot )$:
$$ \bigg( \itg_0^{+\infty} \vert \Grave{f}(x,\rho) \vert^2 \frac{d\rho}{\rho^{1-2\alpha}} \bigg)^{\frac{1}{2}} \leq \Vert f(x,\cdot )\Vert_{\dot H^s(\R)},$$
where
$$ \Vert f(x,\cdot )\Vert_{\dot H^s(\R)}=\bigg( \itg_{\R} \vert \Grave{f}(x,\rho) \vert^2 \vert \rho \vert^{2s}d\rho \bigg)^{\frac{1}{2}},$$
with $s = -\frac{1}{2} + \alpha$. Now, by the dual Sobolev embedding, we find exactly $$\Vert f(x,\cdot )\Vert_{\dot H^s(\R)} \leq C \Vert f(x,\cdot ) \Vert_{L^{p_0}(\R)}. $$ 
Indeed, $s = - \frac{1}{d+1}$ and $\frac{1}{2} - \frac{1}{p_0} = \frac{d+1 - (d+3)}{2(d+1)} = s$ since $\alpha = \frac{1}{p_0'}$. Finally, 
\begin{align*}
    \bigg(\itg_{\R^d} \bigg( \itg_0^{+\infty} \vert \Grave{f}(x,\rho) \vert^2 \frac{d\rho}{\rho^{1-2\alpha}} \bigg)^{\frac{p_0}{2}} dx \bigg)^{\frac{2}{p_0}}
    &\leq C \bigg(\itg_{\R^d} \bigg( \itg_{\R} |f(x,y)|^{p_0} dy \bigg)^{\frac{p_0}{p_0}} dx \bigg)^{\frac{2}{p_0}} \\
    &\leq C \Vert f \Vert_{L^{p_0}(\R^{d+1})}^2,
\end{align*}
which concludes the proof.
\end{proof}

The method was showcased here in its simplest form, but can and will be extended to the more complex surfaces that appear in the Fourier analysis of geophysical equations. The Strichartz estimates of interest in this article will be derived using this method. Compared to the classical dispersive approach described above, it is shorter and better suited to the Lebesgue setting of measurable functions than the Littman theorem that requires the careful removal of non-smooth singularities of the surface. The cost of this method is that we obtain a smaller range of exponents after interpolation of the estimate obtained through restriction with energy conservation.

As can be seen in the proof above, the Sobolev weights that appear after applying the Tomas-Stein theorem to the slice are a testimony of the sharpness of the result: it exactly yields the corresponding $1$-dimensional Sobolev embedding (that is to say, giving the desired $L^p$ norm). In the geophysical cases, several Sobolev weights centered on different points of the surface appear, that sometimes do not match and must be bluntly bounded. Since every other inequality used in the proofs is sharp, it is easy to relate the sharpness of the final result to the loss in the Sobolev weights or not.

\section{The Primitive System}

\subsection{Linear analysis}

The primitive system \eqref{pe}, used for instance in climate models to describe the ocean motion, describes the effects of the rotation of the Earth, the buoyancy, and the stratification on the fluid. These effects appear in the equations through linear terms that can be combined into a skew-symmetric matrix operator:
letting $U=(u_1,u_2,u_3,\theta)^T$, we can rewrite the first two equations of \eqref{pe} as follows:
\begin{equation}
\label{eq:primatrix}
\partial_t U + (u \cdot \nabla) U + \dfrac{1}{\varepsilon} A U = - \dfrac{1}{\varepsilon} (\nabla p,0)
\end{equation} 
where $A$ is the skew-symmetric matrix
$$A=\left(\begin{array}{cccc}
    0 & -1 & 0 & 0  \\
    1 & 0 & 0 & 0 \\
    0 & 0 & 0 & F^{-1} \\
    0 & 0 & -F^{-1} & 0 \\
\end{array}\right).$$

The operator $A$ disappears in energy estimates, however it still exhibits a dispersive nature that can be encapsulated in Strichartz estimates. We begin with a linear analysis of this operator to make the connection with the semigroup in Theorem \ref{th:pri} apparent.

We denote by $\mathbb{P}$ the Leray projector onto divergence-free vector fields. We allow ourselves to also use $\mathbb{P}$ for its extension to 4-dimensional vectors, such that $\mathbb{P} U = (\mathbb{P}u, \theta)^T$. Applying the Leray projector  on \eqref{eq:primatrix} to eliminate the pressure $p$ yields:
\begin{equation}
\partial_t U + \mathbb{P}(u \cdot \nabla) U + \dfrac{1}{\varepsilon}\mathbb{P} A U = 0.
\end{equation} 
As explained in Section 1.5, the treatment of the full non-linear problem first relies on the study of the homogeneous linear equation:
\begin{equation}
\partial_t U = -  \dfrac{1}{\varepsilon}\mathbb{P} A U.
\end{equation}
and the semigroup $S(t)$ it generates. In Fourier space, by plugging in the expression of the Leray projector, a direct matrix product gives:
$$-\widehat{\mathbb{P} A U} = \mathcal{A}\hat{U} $$
where$$\mathcal{A}=\left(\begin{array}{cccc}
\dfrac{\xi_1 \xi_2}{ \vert \xi \vert^2}     &  \dfrac{\xi_2^2 + \xi_3^2}{\vert \xi \vert^2} & 0 & \dfrac{\xi_1 \xi_3}{ F \vert \xi \vert^2}\\
-\dfrac{\xi_1^2 + \xi_3^2}{\vert \xi \vert^2}     & -\dfrac{\xi_1 \xi_2}{ \vert \xi \vert^2} & 0 & \dfrac{\xi_2 \xi_3}{ F \vert \xi \vert^2}\\
\dfrac{\xi_2 \xi_3}{ \vert \xi \vert^2} & -\dfrac{\xi_1 \xi_3}{ \vert \xi \vert^2}
& 0 & -\dfrac{\xi_1^2 + \xi_2^2}{F \vert \xi \vert^2}\\
0 & 0 & \dfrac{1}{F} & 0 \end{array}\right) .$$

We only need to study $\mathbb{P} A$ on the subspace of divergence-free vector fields, that is, study $\mathcal{A}$ on the subspace of vectors orthogonal to $(\xi_1, \xi_2, \xi_3,0)^T$.  A computation of the eigenelements of $\mathcal{A}$ in this subspace gives the following three eigenvalues:
$$\left\{ 0, \pm i p_F(\xi) \right\}$$
where we recall that $$p_F(\xi) = \dfrac{ \vert \xi \vert_F}{F \vert \xi \vert}\ , \ \ \ \ \ \vert \xi \vert_F=\vert (\xi_1, \xi_2, F \xi_3)\vert.$$

The corresponding eigenvectors $a_0$, $a_+$, $a_-$ are orthogonal, which can be deduced from their explicit expression (that has otherwise no use, so we omit to give it here). This allows us to decompose the semigroup $S(t)$ using its action on the different eigenvectors in the following manner:
\begin{align*}
    S(t) U_0 =\  & e^{it \frac 1 \varepsilon p_F(D) } \mathcal{F}^{-1}\bigg( ( \hat U_0  \cdot a_+ ) a_+ \bigg) \\
    +&\  e^{-it\frac 1 \varepsilon p_F(D) } \mathcal{F}^{-1}\bigg( ( \hat U_0  \cdot a_- ) a_- \bigg) \\
    +&\    \mathcal{F}^{-1}\bigg( ( \hat U_0  \cdot a_0 ) a_0 \bigg) .
\end{align*}
The component along $a_0$ of $U_0$, which is trivially propagated, is called the \textit{quasigeostrophic part}. The other two components are referred to as the \textit{oscillatory parts} and are the ones subject to dispersion.
This is why we are interested in the semigroup $e^{\pm i t \frac 1 \varepsilon p_F(D)}$, which propagates an initial data $g$ as:
$$ e^{\pm i t \frac 1 \varepsilon p_F(D)}g : (t,x) \longmapsto \itg_{\R^3} e^{ix\cdot \xi \pm i t p_F(\xi)}\hat g (\xi) d\xi.$$
Now that the connection with the primitive system has been made explicit, we prove the Strichartz estimate for this semigroup.

\subsection{Proof of Theorem \ref{th:pri}}
We can set $\varepsilon=1$, since the estimates on the semigroup generated by $\frac{1}{\varepsilon} \mathbb{P} A$ for any $\varepsilon$ can be derived by rescaling time by $\varepsilon$ (this rescaling explains the dependence in $\varepsilon$ of the constant in Theorem \ref{th:pri}). Our first goal is to reduce the proof of the Strichartz estimate to a Fourier restriction estimate on the surface
$$\mathcal{M}_{F}=\left\{ \zeta = \left(\xi,p_F(\xi)\right), \xi \in \R^{3}\right\}\subset \R^4.$$
We endow $\mathcal{M}_{F}$ with the surface measure $d\sigma(\zeta) = \frac{d\xi}{|\xi|^2}$. Denoting $\tilde g (\xi, p_F(\xi)) = \hat g(\xi)$ the projection onto the surface of the Fourier transform of $g \in \s(\R^3)$, the map $g \mapsto \tilde g$ is an isometry between spaces $\dot H^1(\R^3) \rightarrow L^2(\mathcal{M}_F, d\sigma)$. Using this isometry, the Strichartz estimate we seek thus rewrites as the continuity of the following extension operator:
\begin{equation}
    L^2(\mathcal{M}_F, d\sigma) \ni \tilde g \longmapsto \left( (t,x) \mapsto \itg_{\mathcal{M}_F} e^{i(x,t)\cdot \zeta} \tilde g (\zeta) d\sigma (\zeta) \right) \in L^{6}(\R^4).
\end{equation}
The continuity of this operator is equivalent to the continuity of the dual restriction operator $f \mapsto \hat{f}\vert_\mathcal{M}$ between $L^\frac{6}{5}(\R^4) \rightarrow L^2(\mathcal{M}, d\sigma)$, moreover the two share the same operator norm. The Strichartz estimate we seek hence results from the following restriction estimate:

\begin{lemma}
Consider the surface $\mathcal{M}_{F}$ endowed with the measure $d\sigma(\xi,p_F(\xi)) = \frac{d\xi}{|\xi|^2}$ and with $F\in (1,\infty)$. There exists a constant $C>0$ such that for any $f\in\s(\R^4)$,
$$\Vert \hat f\vert_{\M_F}  \Vert_{L^2(\M_F,d\sigma)} \leq C \frac{F^{\frac 1 2}}{( F-1 )^{\frac 1 6}} \Vert f \Vert_{L^{\frac{6}{5}}(\R^4)}. $$
\end{lemma}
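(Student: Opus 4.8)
The plan is to mimic the slicing argument of Theorem 5, but now slicing the surface $\M_F \subset \R^4$ over the value $\rho = p_F(\xi)$ of the last coordinate, so that the level sets become (pieces of) $2$-dimensional surfaces in $\R^3$ to which the Tomas--Stein theorem (Theorem \ref{tomasstein}, $d=3$, $p_0 = 4/3$) can be applied. First I would understand the geometry of the slices: for fixed $\rho \in (1/F,\, 1)$ the set $\{\xi \in \R^3 : p_F(\xi) = \rho\}$ is a cone (since $p_F$ is homogeneous of degree $0$), whose spherical cross-section is a closed curve on $\mathbb{S}^2$; homogeneity lets me further foliate each such cone over the radius $r = |\xi|$, reducing everything to a family of curves $\Gamma_\rho$ on the unit sphere. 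So I would write
\begin{align*}
\Vert \hat f \vert_{\M_F} \Vert_{L^2(\M_F, d\sigma)}^2 &= \itg_{\R^3} |\hat f(\xi, p_F(\xi))|^2 \frac{d\xi}{|\xi|^2} = \itg_{1/F}^{1} \itg_{\{p_F = \rho\}} |\hat f(\xi,\rho)|^2 \, d\mu_\rho(\xi)\, d\rho,
\end{align*}
for an appropriate disintegration measure $d\mu_\rho$, using $p_F$ as a change of variable. The key point is that each level surface $\{p_F = \rho\}$, after removing the measure-zero singular set (the axis $\xi_h = 0$ and the plane $\xi_3 = 0$ where $p_F$ attains its extreme values and curvature degenerates), is a conical surface in $\R^3$ that is genuinely curved; slicing it once more in the radial variable yields curves in $\R^2$-worth of sphere that have nonvanishing curvature, so the $2$-dimensional Tomas--Stein estimate applies with $p_0 = 4/3$.

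Next I would apply the Tomas--Stein estimate on each slice. This introduces, as in Section 2, a one-dimensional Sobolev weight in the $\rho$-variable (and the radial variable), and since $p_0 = 4/3 \le 2$ I can pull the $\rho$-integral inside by Minkowski's inequality, recognizing the inner integral as (a piece of) a homogeneous Sobolev norm of $f(x,\cdot)$ in its last variable, then close the argument with the dual one-dimensional Sobolev embedding $\dot H^{s}(\R) \hookleftarrow L^{p_0}(\R)$ with $s = 1/2 - 1/p_0 = -1/4$, exactly as in the proof of Theorem 5. Carrying the Jacobian factors and the lengths of the curves $\Gamma_\rho$ through this computation is where the explicit constant $F^{1/2}/(F-1)^{1/6}$ must emerge: the $F^{1/2}$ comes from the anisotropic rescaling $\xi_3 \mapsto F\xi_3$ relating $|\xi|_F$ to a sphere, and the $(F-1)^{-1/6}$ from the fact that the curves $\Gamma_\rho$ shrink to points (and their curvature, hence the implied constant in Tomas--Stein for the curve, blows up) as $\rho$ approaches the endpoints $1/F$ and $1$, which happens in a window of size comparable to $F - 1$.

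The main obstacle I anticipate is twofold. First, the curvature of the slices is \emph{not uniform}: near the degenerate directions $\xi_h = 0$ and $\xi_3 = 0$ the Gaussian curvature of $\{p_F = \rho\}$ (equivalently the curvature of $\Gamma_\rho$) degenerates, and near $\rho \to 1/F$ or $\rho \to 1$ the whole curve collapses; so I cannot simply quote Tomas--Stein with a fixed constant but must track how the constant depends on $\rho$ (and possibly dyadically decompose in the distance to the bad sets), then check that the resulting $\rho$-integral still converges and produces precisely the claimed power of $F-1$. Second, making the disintegration $\int |\hat f(\xi, p_F(\xi))|^2 \frac{d\xi}{|\xi|^2} = \int \int \cdots\, d\mu_\rho\, d\rho$ rigorous requires a careful coarea-type computation for the map $\xi \mapsto (|\xi|, p_F(\xi), \text{angle along }\Gamma)$ and an identification of the Jacobian; this is where the precise form of the surface measure $d\sigma = d\xi/|\xi|^2$ must be matched against the product of the radial weight, the $\rho$-weight $d\rho/|\nabla_{\mathbb{S}^2} p_F|$, and arclength on $\Gamma_\rho$, so that the Sobolev exponent $s=-1/4$ and the embedding close exactly. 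If the weights match — which the sharpness discussion in Section 2 suggests they should on the good part of the surface — the estimate follows; any mismatch on the bad part is absorbed into the $(F-1)^{-1/6}$ loss.
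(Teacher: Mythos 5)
Your overall strategy is the right one and coincides with the paper's: change variables to $\mu = p_F(\xi)$ so that the left-hand side becomes an iterated integral over the level sets of $p_F$, apply a restriction estimate on each slice, commute the order of integration via Minkowski, and close with a one-dimensional dual Sobolev embedding in the last coordinate. The split of the resulting $\mu$-weight around the two singular endpoints $\mu = 1/F$ and $\mu = 1$, with the $F$-dependence read off from the Jacobian there, is also how the paper obtains the factor $F^{1/2}/(F-1)^{1/6}$.

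The genuine gap is in the restriction estimate you propose to apply on each slice. For fixed $\mu$, the level set $\{p_F = \mu\}$ is a \emph{cone} $\{(\xi_h,\rho\lvert\xi_h\rvert)\}$ in $\R^3$ with $\rho = \sqrt{(\mu^2 - F^{-2})/(1-\mu^2)}$; a cone has identically vanishing Gaussian curvature (one principal curvature is zero at every point, not just on a measure-zero singular set), so the Tomas--Stein theorem in $\R^3$ with $p_0 = 4/3$ does \emph{not} apply, and removing the axis or a plane does not fix this. The paper instead invokes its Corollary 1, the cone restriction estimate in $\R^3$, whose exponent is $p_0 = 6/5$ — strictly worse than $4/3$ precisely because a cone is only curved in one direction. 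This cascades into your exponent bookkeeping: with $p_0 = 6/5$ the one-dimensional Sobolev index is $s = 1/2 - 5/6 = -1/3$, matching the target $L^{6/5}(\R^4)$; your $s = -1/4$ would produce an $L^{4/3}$ bound that the surface does not satisfy. Your further radial slicing of each cone into circles on $\mathbb{S}^2$ is essentially the mechanism hidden inside Corollary 1 (these are circles in a two-plane, to which the $d=2$ Tomas--Stein applies with $p_0 = 6/5$), but this inner slicing is exactly where the exponent degrades from $4/3$ to $6/5$ — so the ``$2$-dimensional Tomas--Stein with $p_0 = 4/3$'' you invoke is a contradiction in terms, and the estimate cannot be pushed to $p_0 = 4/3$. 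Also, only the endpoint $\mu \to 1$ has the circles shrink to a point; at $\mu \to 1/F$ they open up to the equator, which is why the two singularities produce different weights in the paper's proof.
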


We will adapt the slicing method of the previous section: indeed, notice that the level sets of $\M_F$ are cones of varying opening angles. Our goal is thus to write the $L^2$ norm of $f\vert_\M$ as an integral over all level sets and apply a suitable restriction theorem at each level.
We thus need to extend Theorem 5 into a corollary for cones of arbitrary angle:
\begin{Corollary}
Let $\cont_{\rho}:= \{ (\xi, \rho \vert \xi \vert),\ \xi\in\R^2 \}$ be a cone in  $\R^3$, with surface measure $d\sigma_\rho =\frac{d\xi}{\vert \xi \vert}$. Then, for any function $f\in \s(\R^{3})$, one has:
$$ \Vert \hat{f}|_{\cont_{\rho}} \Vert_{L^2(\cont_{\rho}, d\sigma_\rho)} \leq C \vert \rho \vert^{-\frac{1}{6}}\vert\  \Vert f \Vert_{L^{\frac 6 5}(\R^{3})}. $$
\end{Corollary}

This is directly deduced from the case $\rho=1$ by rescaling correctly $f$ in the third variable, with $\frac 6 5$ simply being the $p_0$ exponent in Theorem 2 for cones in $\R^3$. We use this to prove Lemma 2.

\begin{proof}[Lemma 2]
We proceed by successively bounding the left-hand side,
$$\Vert \hat f\vert_{\M_F}  \Vert^2_{L^2(\M,d\sigma)} = \itg_{\R^3} \left\vert \hat f \left(\xi, p_F(\xi)\right) \right\vert^2 \frac{d\xi}{\vert \xi\vert^2}\ \cdotp$$
We can split the integral defining $\left\Vert \hat f\vert_{\mathcal{R}} \right\Vert_{L^2(\mathcal{R}, d\mu)}^2$ over the $\xi_3>0$ and $\xi_3<0$ domains. We can restrict our computations to one half of this surface, say $\xi_3>0$, because the other is identical. It does not change the result (up to doubling a constant) by symmetry of $\mathcal{M}_F$.

Let $\xi_h = (\xi_1,\xi_2)$. The first step is to perform the change of variable from $\xi_3$ to $$\mu = p_F(\xi) =  \dfrac{\vert \xi \vert_{F}}{F\vert\xi\vert}$$ in order to slice the surface in level sets of constant $\mu$. We use for this the following identities:
\begin{align*}
&\xi_3 = \sqrt{ \frac{\mu^2-F^{-2}}{1-\mu^2} }\ \ \vert\xi_h\vert, \\
&d\xi_3 = \mu \frac{1-F^{-2}}{(1-\mu^2)^{\frac 3 2}(\mu^2-F^{-2})^{\frac 1 2 }}\ \ \vert\xi_h\vert\ d\mu, \\
\text{and }\ &
\vert\xi\vert^2 = \vert\xi_h\vert^2\bigg(\frac{1-F^{-2}}{1-\mu^2}\bigg).    
\end{align*}

Plugging in these three relations yields
\begin{align*}
 \itg_{\R^2_{\xi_h}\times\R^+_{\xi_3}} & \left\vert \hat f \left(\xi, \dfrac{\vert \xi \vert_{F}}{F\vert\xi\vert}\right) \right\vert^2 \frac{d\xi}{\vert \xi\vert^2}  \\
 &=\itg_{F^{-1}}^1 \itg_{\R^2_{\xi_h}} \left\vert \hat f \bigg(\xi_h, \sqrt{ \frac{\mu^2-F^{-2}}{1-\mu^2} }\ \ \vert\xi_h\vert , \mu \bigg) 
 \right\vert ^2  \frac{d\xi_h}{\vert \xi_h\vert} \ \mu (1-\mu^2)^{-\frac 1 2}(\mu^2-F^{-2})^{-\frac 1 2 }\ \  d\mu .
\end{align*}

For any given $\mu$ the integral in $d\xi_h$ is an $L^2$ norm of a Fourier transform on a cone, so we can apply Corollary 1 with $\rho = \sqrt{ \frac{\mu^2-F^{-2}}{1-\mu^2} }$, letting $p=\frac 6 5$. This transforms back to physical space the first three coordinates of $\hat{f}$ and we obtain a bound involving an $L^p$ norm of $\Grave{f}$ (the partial Fourier transform of $f$ in the fourth coordinate):
\begin{align*}
\itg_{\R^2_{\xi_h}} &\left\vert \hat f \bigg(\xi_h, \rho \vert\xi_h\vert , \mu \bigg) \right\vert^2  \frac{d\xi_h}{\vert \xi_h\vert} \leq  C \rho^{-\frac{1}{3}} \bigg(\itg_{\R^3_x} \vert \Grave{f}(x,\mu) \vert^p dx \bigg)^{\frac 2 p}.
\end{align*}
Using this estimate as well as keeping track of the $\mu$ terms in $\rho$, one gets
\begin{align*}
    \itg_{F^{-1}}^1 \itg_{\R^2_{\xi_h}} &\left\vert \hat f \bigg(\xi_h, \rho \vert\xi_h\vert , \mu \bigg) \right\vert^2  \frac{d\xi_h}{\vert \xi_h\vert} \ \mu (1-\mu^2)^{-\frac 1 2}(\mu^2-F^{-2})^{-\frac 1 2 }\ \  d\mu\\
    &\leq C \itg_{F^{-1}}^1 \bigg(\itg_{\R^3_x} \vert \Grave{f}(x,\mu) \vert^p dx \bigg)^{\frac 2 p} \mu (1-\mu^2)^{-\frac 1 3}(\mu^2-F^{-2})^{-\frac 2 3}d\mu.
\end{align*}
The Minkowski inequality turns the $L^2_{\mu}L^p_x$ norm into a $L^p_x L^2_{\mu}$ norm:
\begin{align}
\label{eq:19}
     \itg_{F^{-1}}^1 \bigg(\itg_{\R^3_x} \vert \Grave{f}(x,\mu) \vert^p dx \bigg)^{\frac 2 p}& \mu (1-\mu^2)^{-\frac 1 3}(\mu^2-F^{-2})^{-\frac 2 3}d\mu\\ 
     \nonumber
     \leq \Bigg( \itg_{\R^3_x} \bigg( &\itg_{F^{-1}}^1 \vert \Grave{f}(x,\mu) \vert^2 \mu (1-\mu^2)^{-\frac 1 3}(\mu^2-F^{-2})^{-\frac 2 3}d\mu \bigg)^{\frac p 2 } dx \Bigg)^{\frac 2 p}.
\end{align}
To conclude, it is enough to prove that $$\itg_{F^{-1}}^1 \vert \Grave{f}(x,\mu) \vert^2 \mu (1-\mu^2)^{-\frac 1 3}(\mu^2-F^{-2})^{-\frac 2 3}d\mu \leq C_F \Vert f(x,\cdot) \Vert_{L^p(\R)}^2.$$

To do so, we shall split the above integral around the singularities at $F^{-1}$ and $1$, and make sure we can use the Sobolev embedding $L^{\frac 6 5}(\R) \hookrightarrow \dot H^{-\frac 1 3}(\R)$ for each integral piece. We thus write
$$\itg_{F^{-1}}^1 \vert \Grave{f}(x,\mu) \vert^2 \mu (1-\mu^2)^{-\frac 1 3}(\mu^2-F^{-2})^{-\frac 2 3}d\mu  = \itg_{F^{-1}}^{ \frac{F^{-1}+1}{2} } ... + \itg_{ \frac{F^{-1}+1}{2}  }^1 ...$$
For the first integral piece (with $\mu \in [F^{-1}, \frac{F^{-1}+1}{2} ]$), the weight in $\mu$ satisfies: $$\mu (1-\mu^2)^{-\frac 1 3}(\mu^2-F^{-2})^{-\frac 2 3} \leq C \frac{F}{(F-1)^{\frac 1 3}} (\mu - F^{-1})^{-\frac 2 3},$$ and this is exactly the exponent $2s$ we seek for a Sobolev weight with $s = -\frac 1 3$. This means we can bound the first integral piece by $ \Vert \Grave{f}(x,\cdot) \Vert^2_{\dot H^{- \frac 1 3}(\R)}$. Remark that we could not do better in view of the weight in $\mu$: the optimal exponent naturally appears in the computation. This optimality is a testimony of the sharpness of the result, which will be formally proved with Proposition 1.

For the second integral piece (with $\mu \in [\frac{F^{-1}+1}{2},1]$), the singularity at $1$ exhibits an exponent $2s=-\frac 1 3$ rather than $-\frac 2 3$. We have to bluntly bound $\vert 1-\mu \vert^{-\frac 1 3}\leq \vert 1-\mu \vert^{-\frac 2 3}$ to recover the desired Sobolev weight. This might seem brutal but cannot be avoided since the singularity at $F^{-1}$ already prevented to have a better  (greater) exponent than $-\frac 2 3$. We make use of this room to have a better dependency in $F$, an we shall use the inequality $(\mu-F^{-1})^{-\frac 2 3} \leq \bigg(\frac{F}{F-1}\bigg)^{\frac 1 3} (1-\mu)^{-\frac 1 3}$. It yields
$$\mu (1-\mu^2)^{-\frac 1 3}(\mu^2-F^{-2})^{-\frac 2 3} \leq C \bigg(\frac{F}{F-1}\bigg)^{\frac 1 3} (1-\mu)^{-\frac 2 3}.$$
Once again it is the desired Sobolev weight and we can bound this second integral piece by the Sobolev norm $ \Vert \Grave{f}(x,\cdot) \Vert^2_{\dot H^{- \frac 1 3}(\R)}$. The aforementioned Sobolev embedding $L^{\frac 6 5}(\R) \hookrightarrow \dot H^{-\frac 1 3}(\R)$ used in the last coordinate then gives
$$ \itg_{F^{-1}}^1 \vert \Grave{f}(x,\mu) \vert^2 \mu (1-\mu^2)^{-\frac 1 3}(\mu^2-F^{-2})^{-\frac 2 3}d\mu \leq C \frac{F}{(F-1)^{\frac 1 3}}\Vert f(x,\cdot) \Vert_{L^p(\R)}^2.$$
Plugging this in \eqref{eq:19} concludes the proof of both Lemma 2 and the homogeneous estimate \eqref{esti_pri} of Theorem \ref{th:pri}. 

In order to have the inhomogeneous estimate \eqref{duhamel_pri}, we simply apply Lemma 5 stated in the Appendix, with $p=q=6$, $s=1$, and $C_0= C \varepsilon^{\frac{1}{6}} \frac{F^{\frac 1 2}}{( F-1 )^{\frac 1 6}} $. This is immediate, as the semigroup considered satisfies all the hypothesis, concluding the proof of Theorem \ref{th:pri}.
\end{proof}

\begin{Remark}
    The fact that the estimates blow up as $F\rightarrow 1$ is very intuitive : the surface then becomes the hyperplane $\{\xi_4 = 1\}$, for which no restriction estimate holds, as seen in the introduction. We refer to \cite{cheminpenal} for a study of the primitive equations in the case $F=1$ with viscosity.
\end{Remark}

\subsection{Sharpness: proof of Proposition 1}

We now show the sharpness of Theorem \ref{th:pri}
by proving Proposition 1.
\begin{proof}[Proposition \ref{prop:sharp}]
    The proof is based on \cite{lee_takada} where the Boussinesq system was treated. Let $p,\ q,\geq 0$, and $s \in \R$, and assume the Strichartz estimate \eqref{sharppri} holds. We can assume without loss of generality that $\varepsilon = 1$.
    Let $R>0$ with no assumption on its size, and $0<\delta <1$. 

Consider a set of frequencies with small $\xi_3$:
$$\Lambda_{\delta,R} = \left\{ \xi \in \R^3\ \vert \ \vert\xi_1\vert,\vert \xi_2\vert \in \bigg(\frac{ R}{2}, R\bigg) \ \ \text{and} \ \ \vert\xi_3\vert \in \bigg(\frac{\delta R}{2} , \delta R\bigg) \right\}. $$ Define the function $g_{{\delta,R}}$ such that its Fourier transform is the following characteristic function:
$$\hat g_{\delta,R} ={\left \{\begin{array}{lrcl}
1 & \text{if} & \xi \in \Lambda_{\delta,R} \\
0 & \text{if} & \xi \not\in \Lambda_{\delta,R} , \end{array}\right.}$$
Fix a $N$ that will be taken larger enough later, and consider the following set in physical space:
\begin{align*}
 L_{\delta, R, N}= \bigg\{ (t,x) \in \R\times \R^3, \ \vert x_1 \vert,\ \vert x_2 \vert & \in (\frac{1}{2N  R}, \frac{1}{N R}),\\
 \vert x_3 \vert & \in (\frac{1}{2N \delta R}, \frac{1}{N \delta R}), \ \vert t \vert \in (\frac{1}{2N \delta^2 }, \frac{1}{N \delta^2 }) \bigg\}.
\end{align*} 

We evaluate the norm of $g_{\delta ,R}$ and of $e^{\pm i t p_F(D)}g$. The initial data can be evaluated as so:
\begin{align*}
    \Vert g_{\delta,R} \Vert_{\dot H^s}^2 &=\itg_{\R^3} \vert \hat g_{\delta,R} (\xi) \vert^2 \ \vert \xi \vert^{2s} d\xi \\
    &=\itg_{\Lambda_{\delta,R}} \vert\xi\vert^{2s} d\xi \\
    & \simeq  R^3 \times \delta \times R^{2s}.
\end{align*}
where $a\simeq b$ means that there exists some universal constant $C>0$ such that $C^{-1} b \leq a \leq C b$.

On the other hand, notice that for any $(t,x) \in \R\times \R^3$, we can change the phase in the semigroup by any amount independent of $\xi$. We rotate it by $-\frac{t}{F}$, because the singularity at $F^{-1}$ in the proof of Theorem \ref{th:pri} was the one that saturated the inequalities. Hence we have:
$$ \vert e^{\pm i t p_F(D)}g_{\delta,R} (t,x) \vert =   \left\vert \itg_{\R^3} e^{ix\cdot \xi + it p_F(\xi)}\hat g_{\delta,R} (\xi) d\xi \right\vert = \left\vert \itg_{\Lambda_{\delta,R}} e^{ix\cdot \xi + it (p_F(\xi)-\frac{1}{F})} d\xi \right\vert , $$
where we have also used the definition of $g_{\delta,R}$.

Now, we use the following expansion of the phase:
\begin{align*}
    p_F(\xi)-\dfrac{1}{F} &= \dfrac{\vert \xi \vert_F}{F \vert \xi \vert}-\dfrac{1}{F}\\ &= \frac{\vert \xi \vert_F - \vert \xi \vert}{F \vert \xi \vert} \\
    &= \frac{\vert \xi \vert_F^2 - \vert \xi \vert^2}{F \vert \xi \vert(\vert \xi_F \vert + \vert \xi \vert)} \\
    &\simeq \frac{(F^2-1)\xi_3^2}{F\vert\xi\vert^2} \\
    &\simeq \delta^2,
\end{align*}
with the last inequality holding for $\xi \in \Lambda_{\delta,R}$ (and the constant of the $\simeq$ depending on $F$). Therefore, for $(t,x)\in L_{\delta,R,N}$, by construction we have 
$$\vert ix\cdot \xi + it (p_F(\xi)-\frac{1}{F}) \vert \lesssim \frac 1 N,$$
so that $e^{ix\cdot \xi + it (p_F(\xi)-\frac{1}{F})}\simeq 1$
which in turn yields for $N$ large enough (but independent of $\delta$ and $R$),
$$  \itg_{\R^3} e^{ix\cdot \xi + it (p_F(\xi)-\frac{1}{F})}\hat g_{\delta,R} (\xi) d\xi \simeq \vert \Lambda_{\delta,R} \vert \simeq R^3\delta .$$
Therefore, the Lebesgue space-time norm is bounded by below by considering only $L_{\delta,R,N}$:
\begin{align*}
    \Vert e^{\pm i t p_F(D)}g_{\delta,R} \Vert_{L^p_tL^q_x(\R^4)} &\geq \Vert e^{\pm  i t p_F(D)}g_{\delta,R} \Vert_{L^p_tL^q_x(L_{\delta,R,N})} \\
    &\geq C_N R^3 \delta \times R^{-\frac 3 q} \delta^{-\frac 1 q} \times \delta^{-\frac 2 p}.
\end{align*}

Taking $g=g_{\delta,R}$ in the Strichartz estimate \ref{sharppri}, and plugging in the estimates of the norm, we find:
\begin{equation}
    R^3 \delta \times R^{-\frac 3 q} \delta^{-\frac 1 q} \times \delta^{-\frac 2 p} 
    \leq C_N  \Vert e^{\pm i t p_F(D)}g_{\delta,R} \Vert_{L^p_t L^q_x}
    \leq C \Vert g_{\delta, R} \Vert_{\dot H^s} 
    \leq R^{\frac 3 2 -s} \delta^{\frac 1 2}.
\end{equation}
As $R$ can be both very large and very small, it imposes the well-known equality of Sobolev embeddings in 3 dimensions $s = 3(\frac 1 2 - \frac 1 q)$. Letting $\delta \rightarrow 0$ implies $\frac 1 2 - \frac 1 q - \frac 2 p \geq 0$, which is the other desired inequality.

\end{proof}

\begin{Remark} 
We mentioned in the introduction that we cannot have a Lebesgue regularity for the initial data if $p,q,r >2$.

Indeed, assume an inequality of the type \begin{equation}
    \Vert e^{\pm \frac 1 \varepsilon i t p_F(D)}g \Vert_{L^p_t L^q_x}\leq C \Vert g \Vert_{L^r_x}
\end{equation} 
holds for all $g\in L^r_x$. Then, using a Sobolev embedding, this would imply inequality \ref{sharppri} with $s=3(\frac 1 2 - \frac 1 r )$. 

However, we have seen that if this is true, the only admissible Sobolev exponent is the $s$ associated to $q$. Therefore, $r=q$, and the inequality writes 
$$ \Vert e^{\pm \frac 1 \varepsilon i t p_F(D)}g \Vert_{L^p_t L^q_x}\leq C \Vert g \Vert_{L^q_x}.$$
Denote by $f(t,x):=  e^{\pm \frac 1 \varepsilon i t p_F(D)}g$ and assume $g\in \s(\R^3)$ to avoid any regularity consideration. The above inequality implies that $f(t, \cdot) \in L^q_x$ for all $t$, meaning we can plug instead of $g$ any $f(t, \cdot)$ on the right-hand side, and this does not impact the left-hand side as $t\in \R$ (by time-reversibility of the semigroup, see Remark 1). Yet, $\Vert f(t, \cdot)  \Vert_{L^q_x}$ must take arbitrarily small values to ensure the time integrability, so we can force $\Vert f \Vert_{L^p_t L^q_x}=0$, which would imply $g=0$.

\end{Remark}

\section{Stably stratified Boussinesq system}
\subsection{Derivation of the system and linear analysis}

This section focuses on the stably stratified Boussinesq equations and the proof of Theorem \ref{th:bou}. The general Boussinesq system governs the motion and convection of a 3D incompressible fluid — here considered inviscid, see Remark 4 — and writes:
\begin{equation}
    \label{boussinesq}
    \left\{\begin{array}{rcl}
\partial_t u + (u \cdot \nabla) u  &=& - \nabla p + T e_3 \\
\partial_t T + (u\cdot\nabla)T & = & 0 \\
\nabla \cdot u &=&0\\
u\vert_{t=0} = u^0& \text{and} & T\vert_{t=0}=T^0.
 \end{array}\right.
\end{equation} 
It is well-known that this system can describe classical phenomena such as Rayleigh-Bénard convection, which essentially corresponds to the fact that cold air or cold water tends to go below hot air or hot water. The motion of the oceans is also closely linked to this system, since the temperature is non-homogeneous on the Earth, and convection linked to temperature differences are some of the key characteristics of both oceans and atmosphere.

The above system admits a motionless equilibrium solution, with stratified temperature and pressure:
\begin{equation}
    \label{boussinesq_eq}
    \left\{\begin{array}{rcl}
u_{eq} &=&0\\
T_{eq}&=& ax_3 +b \\
p_{eq}&=& \frac a 2 x_3^2 + bx_3 +c, \\
 \end{array}\right.
\end{equation} 
where $a,b,c\in \R$. If the fluid is stable, we shall have $\frac{dT_{eq}}{dx_3} = a := N^2 >0$, an hypothesis that we make in the sequel. It simply means that the hotter layers of fluid are above the denser cold layers.
\begin{Remark}
    Assuming hydrostatic balance ($u=0$) in \eqref{boussinesq} reduces the equations to $\nabla p = T e_3$, which imposes that the pressure $p$ and hence the temperature $T$ depend only on $x_3$, but does not further prescribe their values. The temperature $T$ is chosen as an affine function because if one had taken into account heat diffusivity, then a steady-state equilibrium should additionally have satisfied $\Delta T = 0$. This choice remains the common explicit solution around which perturbations are considered even in the inviscid case (see \cite{widmayer}, whose approach is also reproduced in \cite{lee_takada}, \cite{takada}).
\end{Remark}

Setting $\theta = T-T_{eq}$ and $q=p-p_{eq}$, we can write the equations for perturbations around this equilibrium state as follows:
\begin{equation}
\label{bou}
    \left\{\begin{array}{rcl}
\partial_t u + (u \cdot \nabla) u  &=& - \nabla q + \theta e_3 \\
\partial_t \theta + (u\cdot\nabla)\theta  &=& -N^2 u_3\\
\nabla \cdot u &=&0\\
u\vert_{t=0} = u^0& \text{and} & \theta\vert_{t=0}=\theta^0.
 \end{array}\right.
\end{equation} 
This is how we obtain the inviscid Boussinesq system for a stably stratified fluid. We can merge velocity and temperature in a single 4-dimensional vector $U=(u, \frac{\theta}{N})^T$. Then, setting 
$$  J = \begin{pmatrix} 0&0&0&0\\0&0&0&0 \\0&0&0&-1\\0&0&1&0 \end{pmatrix}$$
the first two equations of System \eqref{bou} write 
\begin{equation}
    \label{bou2}
\partial_t U + (u \cdot \nabla) U + NJU = - (\nabla p,0).
\end{equation} 
Once again, the geophysical effects are concentrated in the skew-symmetric term $NJU$, which vanishes in energy estimates but exhibits a dispersive nature  that can be encapsulated in a Strichartz estimate. Applying the Leray projector $\mathbb P$ onto divergence-free fields gets rid of the pressure, and the linear equation writes
\begin{equation}
    \label{boulin}
    \partial_t U + N \mathbb P J  \mathbb P U = 0
\end{equation}
Here we have used that $ \mathbb P J U =  \mathbb P J \mathbb P U$ since $u$ is already divergence-free, which preserves the skew-symmetry of the operator.

We now perform the linear analysis of the operator $\mathbb P J \mathbb P$. The process is very similar to the previous section. We study its Fourier expression $\widehat{\mathbb P J \mathbb P}$ on the subspace of vectors orthogonal to $(\xi_1, \xi_2, \xi_3,0)^T$ because of the divergence-free condition on $u$. A direct computation from the matrix expression shows that the eigenfrequencies of $\widehat{\mathbb P J \mathbb P}$ on this subspace are $$\left\{ \pm i \frac{\vert\xi_h \vert}{\vert\xi\vert},0\right\}.$$ The associated eigenvectors $a_+, a_-, a_0$ are explicitly computed in \cite{lee_takada} but their exact expression is not of much use in our analysis. Their orthogonality allows to write the semigroup $S(t)$ generated by $N\mathbb P J \mathbb P$ as:
\begin{align*}
    S(t) U_0 =\  & e^{itN\frac{\vert D_h\vert}{\vert D \vert} } \mathcal{F}^{-1}\bigg( ( \hat U_0 \cdot a_+ ) a_+ \bigg) \\
    +&\  e^{-itN\frac{\vert D_h\vert}{\vert D \vert} } \mathcal{F}^{-1}\bigg( ( \hat U_0 \cdot a_- ) a_- \bigg) \\
    +&\    \mathcal{F}^{-1}\bigg( ( \hat U_0 \cdot a_0 ) a_0 \bigg) .
\end{align*}
We find once again the existence of an \textit{oscillatory} component (along $a_\pm$) and a \textit{quasigeostrophic} component (along $a_0$). We now derive the Strichartz estimate for the semigroup $e^{\pm itN\frac{\vert D_h\vert}{\vert D \vert} }$, that is the proof of Theorem \ref{th:bou}.

\subsection{Proof of Theorem \ref{th:bou}}

 We once again set $N=1$ since the general case is directly retrieved by scaling the time variable. The proof of the inhomogeneous estimate is again a direct consequence of Lemma 5 once the homogeneous estimate is proven, so we will not give further details. The exact same reasoning as in the beginning of Section 3.2 reduces the proof of the homogeneous Strichartz estimate for $e^{\pm it\frac{\vert D_h\vert}{\vert D \vert} }$ to the proof of a restriction estimate, this time on the surface $\mathcal{R} \in \R^4$ defined by $$\mathcal{R} = \{(\xi_h,\xi_3,\frac{\vert \xi_h \vert}{\vert \xi \vert}) ,\ \xi = (\xi_h,\xi_3)\in\R^3 \}.$$
Theorem \ref{th:bou} is hence a consequence of the following continuity estimate for the operator $f \mapsto \hat f\vert_\mathcal{R}$:
\begin{lemma}
\label{lem:restribou}
There exists a constant $C>0$ such that for any $f\in\s(\R^4)$, one has
$$ \Vert \hat f\vert_{\mathcal R}  \Vert_{L^2(\M,d\mu)} \leq C \Vert f \Vert_{L^{\frac{6}{5}}(\R^4)}. $$
\end{lemma}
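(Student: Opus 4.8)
The plan is to mimic the slicing argument carried out for the primitive system in Section 3.2, since the surface $\mathcal{R}$ has exactly the same structure: its level sets $\{p(\xi) = \mu\}$ with $p(\xi) = |\xi_h|/|\xi|$ are cones, now of aperture depending on $\mu \in (0,1)$. First I would write the squared $L^2$ norm of $\hat f|_{\mathcal{R}}$ as
\begin{equation*}
\Vert \hat f|_{\mathcal R}\Vert_{L^2(\mathcal R, d\mu)}^2 = \itg_{\R^3} \left| \hat f\left(\xi, \tfrac{|\xi_h|}{|\xi|}\right)\right|^2 d\sigma(\xi),
\end{equation*}
for the appropriate surface measure $d\sigma$ inherited from the Strichartz-to-restriction reduction (analogous to $d\xi/|\xi|^2$ in Section 3.2, up to the precise weight dictated by the isometry $\dot H^1 \to L^2(\mathcal{R}, d\mu)$). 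Then I would perform the change of variables $\xi_3 \mapsto \mu = |\xi_h|/|\xi|$ at fixed $\xi_h$; solving gives $\xi_3 = \sqrt{(1-\mu^2)/\mu^2}\,|\xi_h|$ (on the half $\xi_3 > 0$, with the $\xi_3<0$ half handled identically by symmetry), together with the corresponding Jacobian $d\xi_3$ and the expression $|\xi|^2 = |\xi_h|^2/\mu^2$. This slices the integral into an outer integral over $\mu \in (0,1)$ of inner integrals that are $L^2$ norms of a Fourier transform restricted to the cone $\cont_\rho$ with $\rho = \sqrt{(1-\mu^2)/\mu^2}$.

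Next, for each fixed $\mu$ I would apply Corollary 1 (the cone restriction estimate of arbitrary aperture) with $p = 6/5$, which converts the first three coordinates back to physical space and produces a factor $\rho^{-1/3} = ((1-\mu^2)/\mu^2)^{-1/6}$ times $\big(\int_{\R^3_x} |\Grave f(x,\mu)|^{6/5}\,dx\big)^{5/3}$. Collecting the $\mu$-dependent weights (from the Jacobian, from $d\sigma$, and from $\rho^{-1/3}$) into a single power of $\mu$ and of $(1-\mu^2)$, I would then apply the Minkowski inequality to swap $L^2_\mu L^{6/5}_x$ into $L^{6/5}_x L^2_\mu$, exactly as in \eqref{eq:19}. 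The final step is the one-dimensional estimate: I need to check that the resulting weight in $\mu$ near the two endpoints $\mu = 0$ and $\mu = 1$ is no worse than $|\mu|^{-2/3}$ near $0$ and $(1-\mu)^{-2/3}$ near $1$, so that it is dominated by the Sobolev weight $|\rho|^{2s}$ with $s = -1/3$; then the dual Sobolev embedding $L^{6/5}(\R) \hookrightarrow \dot H^{-1/3}(\R)$ in the last coordinate closes the argument and yields $\Vert f(x,\cdot)\Vert_{L^{6/5}(\R)}$, hence $\Vert f\Vert_{L^{6/5}(\R^4)}$ after integrating in $x$.

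The main thing to verify — and the only place the Boussinesq case genuinely differs from the primitive one — is the endpoint bookkeeping: here the two singularities are at $\mu = 0$ and $\mu = 1$ rather than at $F^{-1}$ and $1$, and since there is no Froude parameter there is no constant blowing up, so I expect the combined weight to come out \emph{exactly} as $\mu^{-2/3}$-type near $0$ and $(1-\mu)^{-2/3}$-type near $1$ with a universal constant — consistent with the clean bound $C\Vert f\Vert_{L^{6/5}}$ in the statement and with the claimed sharpness of \eqref{esti_bou}. Concretely I would split $\int_0^1 = \int_0^{1/2} + \int_{1/2}^1$ and confirm on each half that the accumulated exponent matches $2s = -2/3$; I anticipate that near $\mu = 0$ the power is genuinely $-2/3$ (this is what makes the estimate sharp), while near $\mu = 1$ it is naturally milder, say $-1/3$, and must be crudely enlarged to $-2/3$ as in the primitive case — harmless since it introduces only a universal constant. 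No delicate harmonic analysis is needed beyond what Corollary 1 and the Tomas–Stein input already provide; the work is entirely in the change of variables and the weight computation.
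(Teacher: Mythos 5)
Your plan follows the paper's proof essentially line by line: change variables from $\xi_3$ to $\alpha = |\xi_h|/|\xi|$ on the half $\xi_3>0$, slice into cones of aperture $\rho = \sqrt{1-\alpha^2}/\alpha$, apply Corollary~1 at each level, swap the order of integration via Minkowski, and close with the dual Sobolev embedding $L^{6/5}(\R)\hookrightarrow\dot H^{-1/3}(\R)$ in the last coordinate. The method is correct and is exactly the paper's.

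However, your anticipated endpoint bookkeeping is backwards, and you should be aware of this before trusting your heuristic from the primitive case. After applying Corollary~1 (which contributes $\rho^{-1/3}$) to the slice integral carrying the measure $\frac{d\alpha}{\sqrt{1-\alpha^2}}$, the accumulated weight in $\alpha$ is
\[
\Bigl(\tfrac{\sqrt{1-\alpha^2}}{\alpha}\Bigr)^{-1/3}\,(1-\alpha^2)^{-1/2} \;=\; \alpha^{1/3}\,(1-\alpha^2)^{-2/3}.
\]
Near $\alpha=1$ this behaves like $(1-\alpha)^{-2/3}$, which is \emph{exactly} the Sobolev weight $|\rho|^{2s}$ with $s=-1/3$; this is the sharp endpoint. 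Near $\alpha=0$ the weight behaves like $\alpha^{+1/3}$ — integrable and in fact decaying — and it is here that one applies the crude bound $\alpha^{1/3}\le\alpha^{-2/3}$ (valid since $\alpha\le 1$). You claimed the opposite: that $\alpha=0$ carries exponent $-2/3$ and is the sharp one, while $\alpha=1$ carries only $-1/3$ and needs enlargement. The source of the confusion is the map between the primitive and Boussinesq singularities: in the primitive proof the sharp endpoint $\mu=F^{-1}$ corresponds to $\xi_3=0$, which in the Boussinesq variable $\alpha=|\xi_h|/|\xi|$ is $\alpha=1$, not $\alpha=0$. Since your "check" step is sound, carrying out the computation would have corrected the anticipation, and your final conclusion is right; but the heuristic you gave for why the estimate is sharp is pointing at the wrong singularity.
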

\begin{proof}
 We rely on the slicing method, similarly to what was done in Section 3. We want to slice the integral defining the $L^2$ norm on $\mathcal R$ into partial integrals on the level sets of $\mathcal R$, which are once again cones of various angles. We can restrict our computations to one half of this surface, say $\xi_3>0$. The other half, $\xi_3<0$, is identical. Therefore, we set $\alpha = \frac{\vert \xi_h \vert}{\vert\xi\vert}$ and change the $\xi_3$ variable, with the map $\xi_3 \in (0,\infty)  \mapsto \alpha \in (0,1)$ being one-to-one. Straightforward computations give the identities $$\xi_3 = \frac{\sqrt{1-\alpha^2}}{\alpha} \vert \xi_h \vert \ \text{,}\ \ \ \ \  \ \ \ \ \ d\xi_3 = -\frac{\vert\xi_h\vert}{\alpha^3 \frac{\sqrt{1-\alpha^2}}{\alpha}} d\alpha\ \ \ \ \ \ \text{and}\ \ \ \ \ \   \vert \xi\vert^2 = \frac{\vert\xi_h\vert^2}{\alpha^2} \cdotp$$

Noticing that $0<\alpha <1$, the change of variable yields
\begin{equation*}
\label{boussi1}
\left\Vert \hat f\vert_{\mathcal{R}} \right\Vert_{L^2(\mathcal{R}, d\mu)}^2 = \itg_0^1 \itg_{\R^2_{\xi_h}} \left\vert \hat f \bigg( \xi_h,\frac{\sqrt{1-\alpha^2}}{\alpha} \vert \xi_h \vert,\alpha \bigg) \right\vert^2 \frac{d\xi_h}{\vert\xi_h\vert} \frac{d\alpha}{\sqrt{1-\alpha^2} }.
\end{equation*}
Now, we notice that for a given level $\alpha$, the integral over $\xi_h$ is the integral on a cone of angle $\frac{\sqrt{1-\alpha^2}}{\alpha}$. Therefore, we can apply Corollary 1 with $\rho = \frac{\sqrt{1-\alpha^2}}{\alpha}$. It yields
\begin{equation*}
    \left\Vert \hat f\vert_{\mathcal{R}} \right\Vert_{L^2(\mathcal{R}, d\mu)}^2 \leq C \itg_0^1 \bigg( \itg_{R^3_x} \vert \Grave{f} (x,\alpha) \vert^p dx \bigg)^{\frac 2 p} \bigg( \frac{\sqrt{1-\alpha^2}}{\alpha}\bigg)^{-\frac 1 3} \frac{1}{\sqrt{1-\alpha^2}}d \alpha,
\end{equation*}
with $p = \frac 6 5 $, and $\Grave{f}$ being the partial Fourier transform of $f$ with respect to the $\alpha$ coordinate. 

Then, using Minskowski inequality to bound the $L^2_\alpha L^p_x$ norm by an $L^p_x L^2_\alpha$ norm, we have the following bound 
\begin{equation}
    \label{fgrave_mink}
    \left\Vert \hat f\vert_{\mathcal{R}} \right\Vert_{L^2(\mathcal{R}, d\mu)}^2 \leq C  \bigg( \itg_{R^3_x} \bigg( \itg_0^1 \alpha^{\frac 1 3} (1-\alpha^2)^{-\frac 2 3}\vert \Grave{f} (x,\alpha) \vert^2 d\alpha \bigg)^{\frac p 2} dx \bigg)^{\frac 2 p}.
\end{equation}
Now, we can split the following integral to isolate the singular values $\alpha = 0$ and $\alpha = 1$.
$$     \itg_0^1 \alpha^{\frac 1 3} (1-\alpha^2)^{-\frac 2 3}\vert \Grave{f} (x,\alpha) \vert^2 d\alpha = \itg_0^{\frac 1 2} ... + \itg_{\frac 1 2}^1 ...\ .
$$
We study the Sobolev weights in $\alpha$ in each piece, as we did in the proof of Theorem \ref{th:pri}. For the first integral piece, we apply a blunt bound on the weight $\alpha^{\frac 1 3} \leq \alpha^{-\frac 2 3} $ to bound the first integral by the Sobolev norm $ \Vert \Grave{f}(x,\cdot) \Vert^2_{\dot H^{- \frac 1 3}(\R)}$. The second integral, however, already has the desired Sobolev weight: since $(1-\alpha^2)^{-\frac 2 3} = (1-\alpha)^{-\frac 2 3} (1+\alpha)^{-\frac 2 3} $ the weight at $\alpha = 1$ behaves like a $- \frac 2 3$ exponent ; and we can sharply bound this by the Sobolev norm $ \Vert \Grave{f}(x,\cdot) \Vert^2_{\dot H^{- \frac 1 3}(\R)}$. We thus have
$$     \itg_0^1 \alpha^{\frac 1 3} (1-\alpha^2)^{-\frac 2 3}\vert \Grave{f} (x,\alpha) \vert^2 d\alpha \leq C \Vert {f}(x,\cdot) \Vert_{\dot H^{- \frac 1 3}(\R)}^2,$$
and the dual Sobolev embedding $L^p(\R) \hookrightarrow H^{- \frac 1 3}(\R) $ combined with \eqref{fgrave_mink} directly leads to 
$$ \left\Vert \hat f\vert_{\mathcal{R}} \right\Vert_{L^2(\mathcal{R}, d\mu)}^2 \leq C \Vert f \Vert^2_{L^p(\R^4)},$$
which concludes the proof.
\end{proof}

\section{Euler equation in a rotational framework} 

\subsection{Linear analysis}

In this part, we aim at proving Theorem \ref{th:rot} stated in the introduction. To describe the behaviour of non-viscous rotating fluids, the natural system to look at is the following, which is just the Euler equation with an additional rotation term, written as \eqref{eulertourne} in the introduction. Let us rewrite this system for more clarity:
$${\left\{\begin{array}{rcl}
\partial_t u + u\cdot \nabla u + \dfrac{e_3 \wedge u}{\varepsilon} + \nabla p &=& 0 \\
\nabla \cdot u &=& 0 \\
u(0,\cdot)&=&u_0,
\end{array}\right.}
$$
where $u$ is the velocity, $p$ the pressure and ${\varepsilon}$ is the Rossby number (hence $\frac 1 \varepsilon$ is proportional to the rotation speed). This system is the same as the one studied in \cite{kohleetakada}, and similar to \cite{cdgg_2} and \cite{cdgg} but here there is no viscosity. As explained in the introduction, viscosity enables one to prove fruitful Strichartz estimates that in turn grant the Taylor-Proudman theorem.

There are two main difficulties in System \eqref{eulertourne}: the pressure, and the advection term. As we explained in the previous sections, the classical method to get rid of the unknown pressure field is to apply the Leray projector $\mathbb{P}$ onto divergence-free vector fields, resulting in the following equations:
$$     {\left\{\begin{array}{lrcl}
\partial_t u +  \mathbb{P}\ (\dfrac{e_3 \wedge u}{\varepsilon}) = -\mathbb{P}\ (u \cdot \nabla u)  \\
\text{div} \ u = 0. \end{array}\right.} $$

The nonlinear term is treated as usual as a forcing term, which leads us to study the semigroup $S(t)$ associated to the linear equation :

\begin{equation}
\label{eulersgl}
 \partial_t u  + \mathbb{P}\ \dfrac{e_3 \wedge u}{\varepsilon} = 0.
\end{equation}
Strichartz estimates for this semigroup were proved in \cite{dut} and \cite{kohleetakada}. They were in turn used to prove long-time existence of solutions to the Euler equations in the rotational framework. The viscous case was treated in \cite{cdgg} using the Coriolis force to obtain a dispersive estimate. The proof in \cite{dut} adapts those arguments to the non-viscous case, and improves on them by resorting to the Riesz-Thorin theorem ; whereas in \cite{kohleetakada} a slightly more precise study is made to obtain sharp results. 

Our goal here is to obtain estimates through a global restriction theorem on a suitable surface. To do so, we first need to write the solution of \eqref{eulersgl} as an inverse Fourier transform. We once again consider the case $\varepsilon=1$, as the general case can be easily recovered by a change of variables. We write the equation in Fourier space, which gives the linear ODE:
$$ \partial_t \hat{u} (t,\xi) = A(\xi) \hat{u} (t, \xi),$$
where $A(\xi)$ is a matrix given by $A(\xi)X = \frac{\xi_3 \xi \wedge X}{\vert\xi\vert^2}$, with eigenvalues $0$ and $ \pm i \frac{\xi_3}{\vert \xi \vert}$. The associated eigenvector to the eigenvalue 0 is $e_0(\xi) = \frac{1}{\vert\xi\vert}\xi$, and the other two eigenvectors (computed in \cite{cdgg}) are
$$e_{1,2}(\xi) = \frac{1}{\sqrt 2 \vert\xi \vert\ \vert\xi_h\vert}
(\xi_1\xi_3 \mp i \xi_2 \vert \xi \vert,
\xi_2\xi_3 \pm i \xi_1 \vert \xi \vert,
-\vert\xi_h\vert^2),$$
where $\xi_h = (\xi_1,\xi_2)$.

Using that $u_0$ is divergence free, we infer that $\widehat{u_0}$ has no component on $e_0$. By projecting $\widehat{u_0}(\xi)$ onto the two other eigenspaces, the semigroup $S(t)$ can be written in terms of the semigroups $e^{\pm it\frac{D_3}{\vert D \vert}}$, defined as:
\begin{equation}
\label{surface}
    e^{\pm i t  \frac{D_3}{\vert D \vert}}g :(t,x) \longmapsto \itg_{\R^3} e^{ix\cdot \xi \pm it \frac{\xi_3}{\vert\xi\vert}}\hat g (\xi) d\xi.
\end{equation}
Remark that this time there is no equivalent of a quasigeostrophic part that would be trivially propagated, and all components of $\hat u_0$ are subject to dispersion.
Now that we have identified the semigroup at play, we can obtain Strichartz estimates for the linear system using restriction theory.

\subsection{Proof of Theorem \ref{th:rot}}

The inhomogeneous estimate of Theorem \ref{th:rot} is a direct consequence of Lemma 5 once again, so we will only give details for the proof of the homogeneous inequality. 

The explicit formula for the semigroup $e^{\pm it\frac{D_3}{\vert D \vert}}$ involves an inverse Fourier transform on the surface
$$ \M:= \left\{ \bigg(\xi, \frac{\xi_3}{\vert \xi \vert}\bigg),\ \ \ \xi \in \R^3\right\} \subset \R^4.$$
Strichartz estimates for this semigroup stem from a restriction theorem on the surface $\M$, simply by using its dual statement, as it was done in the previous sections. Such a restriction estimate is given by the following lemma.

\begin{lemma}\label{lemmerot}
Consider the surface $\M$ endowed with the measure $$d\sigma\bigg(\xi,\frac{\xi_3}{|\xi|}\bigg) = \frac{d\xi}{|\xi|^2}\cdotp$$ 

There exists a constant $C>0$ such that for any $f\in\s(\R^4)$, one has
$$ \Vert \hat f\vert_{\M}  \Vert_{L^2(\M,d\sigma)} \leq C \Vert f \Vert_{L^{\frac{6}{5}}(\R^4)}. $$
\end{lemma}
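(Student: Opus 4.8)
The plan is to run the slicing argument exactly as in the proof of Lemma \ref{lem:restribou} for the Boussinesq system (and its analogue for the primitive system), the only change being the geometry of the level sets. Writing the left-hand side as
$$\Vert \hat f\vert_{\M}\Vert_{L^2(\M,d\sigma)}^2 = \itg_{\R^3}\left\vert \hat f\left(\xi,\tfrac{\xi_3}{\vert\xi\vert}\right)\right\vert^2\frac{d\xi}{\vert\xi\vert^2},$$
I would first note that $\M$ is symmetric under $\xi_3\mapsto -\xi_3$, so it suffices to treat the half $\xi_3>0$, at the cost of a factor $2$ absorbed in $C$. There, for fixed $\xi_h=(\xi_1,\xi_2)$ the map $\xi_3\in(0,\infty)\mapsto\mu=\xi_3/\vert\xi\vert\in(0,1)$ is a diffeomorphism and the level sets $\{\mu=\mathrm{const}\}$ are the cones $\{\xi_3=\rho\vert\xi_h\vert\}$ with $\rho=\mu(1-\mu^2)^{-1/2}$. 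Recording the elementary identities $\xi_3=\mu(1-\mu^2)^{-1/2}\vert\xi_h\vert$, $d\xi_3=\vert\xi_h\vert(1-\mu^2)^{-3/2}d\mu$ and $\vert\xi\vert^2=\vert\xi_h\vert^2(1-\mu^2)^{-1}$, the change of variable turns the integral into
$$C\itg_0^1\left(\itg_{\R^2}\left\vert\hat f\left(\xi_h,\tfrac{\mu}{\sqrt{1-\mu^2}}\vert\xi_h\vert,\mu\right)\right\vert^2\frac{d\xi_h}{\vert\xi_h\vert}\right)(1-\mu^2)^{-1/2}d\mu.$$

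For each fixed $\mu$ the inner integral is the squared $L^2$-norm on the cone $\cont_\rho$ (with $\rho=\mu(1-\mu^2)^{-1/2}$) of the Fourier transform of $\Grave f(\cdot,\mu)$, the partial Fourier transform of $f$ in its fourth coordinate; Corollary 1, with $p=\tfrac65$, bounds it by $C\rho^{-1/3}\Vert\Grave f(\cdot,\mu)\Vert_{L^{6/5}(\R^3)}^2$. Collecting the weights — $\rho^{-1/3}=\mu^{-1/3}(1-\mu^2)^{1/6}$ against the measure factor $(1-\mu^2)^{-1/2}$ — leaves the $\mu$-weight $\mu^{-1/3}(1-\mu^2)^{-1/3}$, and since $\tfrac65\le 2$, Minkowski's inequality for mixed-norm spaces converts the resulting $L^2_\mu L^{6/5}_x$ quantity into an $L^{6/5}_x L^2_\mu$ one. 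It then remains to show, for a.e. $x\in\R^3$,
$$\itg_0^1\vert\Grave f(x,\mu)\vert^2\,\mu^{-1/3}(1-\mu^2)^{-1/3}\,d\mu\le C\,\Vert f(x,\cdot)\Vert_{L^{6/5}(\R)}^2.$$

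To get this I would split the integral at $\mu=\tfrac12$. On $(0,\tfrac12)$ the factor $(1-\mu^2)^{-1/3}$ is bounded, and on $(\tfrac12,1)$ the factor $\mu^{-1/3}$ is bounded; in each piece I bluntly enlarge the remaining singular weight to $\mu^{-2/3}$, resp. $(1-\mu)^{-2/3}$, which is exactly the weight $\vert\cdot\vert^{2s}$ of $\dot H^{s}(\R)$ with $s=-\tfrac13$. The piece on $(0,\tfrac12)$ is then $\le C\Vert\Grave f(x,\cdot)\Vert_{\dot H^{-1/3}(\R)}^2$ directly; the piece on $(\tfrac12,1)$ becomes the same after the unimodular frequency shift $\mu\mapsto\mu-1$, which leaves $\Vert f(x,\cdot)\Vert_{L^{6/5}(\R)}$ unchanged. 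The dual Sobolev embedding $L^{6/5}(\R)\hookrightarrow\dot H^{-1/3}(\R)$ finishes both pieces, and feeding the bound back through Minkowski proves the lemma.

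I do not expect a genuine obstacle here; the one point worth flagging is the double blunt bound $\mu^{-1/3}\le\mu^{-2/3}$ and $(1-\mu^2)^{-1/3}\le(1-\mu)^{-2/3}$. Contrary to the primitive and Boussinesq cases, where at least one of the endpoint singularities produces exactly the Sobolev exponent $-\tfrac23$ and so certifies sharpness, here \emph{both} endpoints are over-estimated, and this is precisely the loss responsible for the non-optimality of Theorem \ref{th:rot} analysed in Section 5.3. Beyond that, the only care needed is the bookkeeping of the powers of $1-\mu^2$ through the change of variables and through Corollary 1.
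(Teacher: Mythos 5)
Your proof is correct and runs essentially along the same lines as the paper's: the same change of variable $\mu=\xi_3/|\xi|$ slicing $\M$ into cones, the same application of Corollary~1 with $\rho=\mu(1-\mu^2)^{-1/2}$, the same net weight $|\mu|^{-1/3}(1-\mu^2)^{-1/3}$, the same blunt bound up to the Sobolev exponent $-\tfrac 2 3$, and the same dual embedding $L^{6/5}(\R)\hookrightarrow\dot H^{-1/3}(\R)$, with the correct remark that the loss is now on both ends. The only (harmless) deviation is the preliminary reduction to $\xi_3>0$ by symmetry: in the rotating Euler case $\xi_3\mapsto\xi_3/|\xi|$ is already odd and injective on all of $\R$, so — unlike the Boussinesq and primitive cases where the map is two-to-one — the paper dispenses with the split and just integrates $\mu\in(-1,1)$, which produces three singularities $\{-1,0,1\}$ instead of your two; both routes give identical bounds.
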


\begin{proof}
Consider $ f\in\s(\R^{4}) $ and set $\xi_h:=(\xi_1,\xi_2)$. We begin by writing $ \Vert \hat f\vert_{\M}  \Vert_{L^2(\M,d\sigma)} $ as an integral over all level sets by changing the $\xi_3$ variable, letting $\mu=\dfrac{\xi_3}{\vert \xi\vert}$ (with $\xi_h$ fixed). By straightforward computations, we have the identities $d\xi_3=\vert \xi_h \vert (1-\mu^2)^{-\frac 3 2} d\mu $ and $\vert \xi \vert = \dfrac{\vert \xi_h\vert}{\sqrt{1-\mu^2}} $, which yield:
\begin{align*}
\Vert \hat f\vert_{\M}  \Vert^2_{L^2(\M,d\sigma)} &= \itg_{\R^3} \left\vert \hat f(\xi_h,\xi_3,\frac{\xi_3}{\vert \xi \vert } )\right\vert^2 \ \ \vert \xi\vert^{-2} d\xi_hd\xi_3 \\ &= \int_{-1}^{1}  \itg_{\R^2_{\xi_h}} \left\vert \hat f(\xi_h,  \dfrac{\mu}{\sqrt{1-\mu^2}} \vert \xi_h \vert , \mu )\right\vert^2 \ \frac{d\xi_h}{\vert \xi_h \vert} \ \dfrac{1}{\sqrt{1-\mu^2}} d\mu .
\end{align*}
We see that at a given level $\mu$, the integral over $\xi_h$ is simply a squared $L^2$ norm on a cone with the appropriate measure $\dfrac{d\xi_h}{\vert \xi_h \vert}$. We can apply Corollary 1 with $\rho=\dfrac{\mu}{\sqrt{1-\mu^2}}$ to get
\begin{align*}
\Vert \hat f\vert_{\M}  \Vert^2_{L^2(\M,d\sigma)} &\leq C\int_{-1}^{1}  \bigg( \itg_{\R^3_x} \left\vert  \Grave{f}(x, \mu )  \right\vert^{p} dx \bigg)^{\frac{2}{p}} \ \vert\mu\vert^{-\frac 1 3}(1-\mu^2)^{-\frac 1 3} d\mu ,
\end{align*}
with $p=\frac 6 5$, and $\Grave{f}(x,\mu)$ the Fourier transform of $f$ in the last coordinate $x_4$ as defined in \eqref{fgrave}.

As before, we now bound the $L^{2}_{\mu}L^{p}_{x}$ norm by a $L^{p}_{x}L^{2}_{\mu}$ norm using the Minkowski inequality. We are left with
\begin{align*}
\Vert \hat f\vert_{\M}  \Vert^2_{L^2(\M,d\sigma)} &\leq C  \bigg(\itg_{\R^3_x}\bigg( \int_{-1}^{1} \left\vert  \Grave{f}(x, \mu )  \right\vert^{2} \vert\mu\vert^{-\frac 1 3}(1-\mu^2)^{-\frac 1 3} d\mu \bigg)^{\frac{p}{2}} \  dx\bigg)^{\frac{2}{p}}.
\end{align*}
We now want to use Sobolev embeddings for the inner integral to bound it by a $L^{p}$ norm. Note that we have three singularities in $\mu=-1$, $0$, $1$, however near each of those points the term $\vert\mu\vert^{-\frac 1 3}(1-\mu^2)^{-\frac 1 3}=\vert\mu\vert^{-\frac 1 3}(1-\mu)^{-\frac 1 3}(1+\mu)^{-\frac 1 3}$ behaves like a simple Sobolev weight. We make use of this by splitting the integral in three:
$$\int_{-1}^{1} \left\vert  \Grave{f}(x, \mu )  \right\vert^{2} \vert\mu\vert^{-\frac 1 3}(1-\mu^2)^{-\frac 1 3} d\mu = \int_{-1}^{-\frac 1 2} ... + \int_{-\frac 1 2}^{\frac 1 2} ... + \int_{\frac 1 2}^{1} ...\ .$$
Let us bound the last term, the others being dealt with identically: we simply use that, for $\frac 1 2 \leq \mu \leq 1$, we have $\vert\mu\vert^{-\frac 1 3}(1-\mu)^{-\frac 1 3}(1+\mu)^{-\frac 1 3}\leq C (1-\mu)^{-\frac 1 3}$. This yields, making an appropriate change of variables:
\begin{align*}
\int_{\frac 1 2}^{1}\left\vert  \Grave{f}(x, \mu )  \right\vert^{2} \vert\mu\vert^{-\frac 1 3}(1-\mu^2)^{-\frac 1 3} d\mu &\leq C\int_{\frac 1 2}^{1}\left\vert  \Grave{f}(x, \mu )  \right\vert^{2}  (1-\mu)^{-\frac 1 3} d\mu \\ &=C\int_{0}^{\frac 1 2}\left\vert  \Grave{f}(x, 1-\nu )  \right\vert^{2}  \nu^{-\frac 1 3} d\nu .
\end{align*}
The Sobolev exponent we seek is $s=-\frac{1}{3}$, to exploit that $L^{p}$ is embedded in $\dot{H}^{s}(\R)$. Since $\vert \nu \vert \leq 1$, we can bound $\nu^{-\frac 1 3}$ by $\nu^{-\frac 2 3}$. Unlike the previous sections, at all the singularities of the surface, we are forced to apply a very blunt bound. Whereas the slicing method could capture optimal results for Theorems \ref{th:pri} and \ref{th:bou}, we must lose the sharpness with this surface. What happens if we avoid using this blunt bound is discussed in Section 5.3. Continuing with the proof and using this bound anyway, we get
\begin{align*}
\int_{\frac 1 2}^{1}\left\vert  \Grave{f}(x, \mu )  \right\vert^{2} \vert\mu\vert^{-\frac 1 3}(1-\mu^2)^{-\frac 1 3} d\mu &\leq C\int_{0}^{\frac 1 2}\left\vert  \Grave{f}(x, 1-\nu )  \right\vert^{2}  \nu^{-\frac 2 3} d\nu \\&\leq C\int_{\R_{\nu}}\left\vert  \Grave{f}(x, 1-\nu )  \right\vert^{2}  \vert\nu\vert^{-\frac 2 3} d\nu\\&\leq C \Vert f(x,\cdot )\Vert ^{2}_{L^{p}(\R)}.
\end{align*}
The same bound holds for the two other terms, so that the whole integral over $\mu$ is controlled by $\Vert f(x,\cdot)\Vert ^{2}_{L^{p}(\R)}$, which in turns yields
\begin{align*}
\Vert \hat f\vert_{\M}  \Vert^2_{L^2(\M,d\sigma)} &\leq C  \bigg(\itg_{\R^3_x}\bigg(  \Vert f(x,\cdot) \Vert^{2}_{L^{p}(\R)}\bigg)^{\frac{p}{2}} \  dx\bigg)^{\frac{2}{p}}\\&= C  \Vert f\Vert ^{2}_{L^{p}(\R^4)}.
\end{align*}
This concludes the proof of Lemma 4, and therefore Theorem \ref{th:rot}.
\end{proof}

\subsection{Discussion of the sharpness}

Let us discuss the optimality of the result. The Strichartz inequalities obtained for the Euler equations using the slicing method are not sharp. More precisely, the optimal range of estimates obtained in \cite{kohleetakada} is
$$ \Vert e^{it\frac{D_3}{\vert D \vert}} u_0 \Vert_{L^q_{t}(\R, L^r_{x}(\R^d))} \leq C \Vert u_0 \Vert_{\dot H^{\frac{3}{2}-\frac{3}{r}}(\R^d)}, $$
for any $(q,r)\neq(2,\infty)$ such that $\frac{1}{q}+\frac{1}{r}\leq \frac{1}{2}$. Our technique only allows us to obtain, after interpolation with energy conservation, the range $\frac{2}{q}+\frac{1}{r}\leq \frac{1}{2}$, for $q\geq 6$.

However, it is easy to identify where the sharpness is lost in the obtention of Theorem \ref{th:rot}: as mentioned in the proof, we used an unnecessarily blunt bound $\nu^{-\frac{1}{3}} \leq \nu^{-\frac{2}{3}}$, because the Sobolev embedding $L^{\frac 6 5}(\R) \hookrightarrow \dot H^{-\frac 1 3}(\R)$ is the one that provides a bound by an $L^{\frac 6 5}$ norm in the last coordinate, so that we obtain in the end the desired $L^{\frac 6 5}$ norm on the whole space $\R^4$. For the primitive system and Boussinesq equation, one of the singularity naturally exhibits the correct Sobolev weight, and this is why the results are sharp. However, in this case, the bound makes us ignore some of the curvature of the surface in the vertical direction, which could be used to improve the exponents. To avoid this, we could have kept the term $\nu^{-\frac 1 3}$, and instead use the Sobolev embedding $L^{\frac 3 2}(\R) \hookrightarrow \dot H^{-\frac 1 6}(\R)$. However, at the end, we would have got the following restriction result involving anisotropic Lebesgue norms:
\begin{equation}
    \label{toto}
     \Vert \hat f\vert_{\M}  \Vert_{L^2(\M,d\sigma)} \leq C \Vert f \Vert_{L^{\frac{6}{5}}(\R^{3}_{x_1,x_2,x_3},L^{\frac{3}{2}}(\R_{x_4}))}.
\end{equation}
In terms of estimates for the semigroup $e^{it\frac{D_3}{\vert D \vert}}$, it writes 
$$\Vert e^{it\frac{D_3}{\vert D \vert}} u_0 \Vert_{L^6_{x}L^3_{t}} \leq C \Vert u_0 \Vert_{\dot H^1(\R^3)}.$$
This is not really a Strichartz estimate, we would have wished for a $L^3_t L^6_x$ norm instead, but to our knowledge there is no way around this issue (see \cite{bbg} where the same phenomenon occurs). We know thanks to \cite{kohleetakada} that $$ \Vert e^{it\frac{D_3}{\vert D \vert}} u_0 \Vert_{L^3_{t}L^6_{x}} \leq C \Vert u_0 \Vert_{\dot H^1(\R^3)} $$ holds and is sharp, so our result with inverted norms is just slightly weaker than ideal.

A way to try and obtain a sharp Strichartz estimate would be to aim for the same exponents in time and space (so that their order does not matter). This would be the following estimate $$ \Vert e^{it\frac{D_3}{\vert D \vert}} u_0 \Vert_{L^4_{t}L^4_{x}} \leq C \Vert u_0 \Vert_{\dot H^{\frac 3 4}(\R^3)} $$ from \cite{kohleetakada}. Those exponents are too strong to be achieved using cone restriction theorems applied to the level sets, since this would require Theorem 2 but with $p_0=\frac 4 3$ in dimension $d=2$ which is not possible, see \cite{tao}, which showcases a limit of the slicing method.

\begin{Remark}
As a final remark, one way to understand the origin of the difference between the primitive and Boussinesq systems and the rotating Euler equations is as follows: the Euler equations in a rotational framework can be seen as the $F\rightarrow \infty$ limit of the Primitive system, since they do not consider the external Earth gravity field. This limit-case point of view translates one-to-one to the singularities appearing in the last integral in the proofs of Theorem \ref{th:pri} and \ref{th:rot}: since plugging $F^{-1}=0$ in the integrand at the end of the proof of Theorem \ref{th:pri}, which us $\mu (1-\mu^2)^{-\frac 1 3}(\mu^2-F^{-2})^{-\frac 2 3}$, yields exactly the integrand in Theorem \ref{th:rot}, $\mu^{-\frac 1 3} (1-\mu^2)^{-\frac 1 3}$. The exponent at the singularity $0$ in the setting of rotating Euler equations can hence be seen as the combination of two contributions: an intrinsic $\mu$ term at $0$ —which also appears in the Boussinesq and Primitive systems— and the singularity at $F^{-1}$ that merges with the former term in the $F\rightarrow \infty$ limit. The divergence diminishes from a $-\frac{2}{3}$ exponent to $-\frac{1}{3}$, hence the possibility of improving the Strichartz estimate.

\end{Remark}

\begin{center}
\textsc{Acknowledgments}
\end{center}

We are very grateful to Isabelle Gallagher for all her advice during the writing of this article and for her ideas that gave birth to this work.  We also wish to thank Frédéric Charve for his additional precious advice, and finally the Département de Mathématiques et Applications of the ENS for their warm welcome.

\newpage

\section*{Appendix}

We prove for the convenience of the reader the following lemma, used to obtain the inhomogeneous Strichartz estimates.

\begin{lemma}
    Consider the abstract system
    \begin{equation}
    \label{system_lemma}
    \left\{\begin{array}{rcl}
\partial_t u + A u &=& \varphi \\
u\vert_{t=0} &=& u^0
 \end{array}\right.
\end{equation} 
Assume that the semigroup associated to the first equation of \eqref{system_lemma} satisfies the homogeneous estimate :
$$ \Vert e^{tA} u^0 \Vert_{L^p_t L^q_x} \leq C_0 \Vert u^0 \Vert_{\dot H^s}$$
for any $u^0$ in $\dot H^s$. Assume also that $(e^{At})^*=e^{-At}$.

Then, the solution of the inhomogeneous problem with vanishing initial data, which writes
$$v:t\longmapsto \itg_0^t e^{(t-t')A} \varphi(t') dt',$$
satisfies the inequality
$$ \Vert v \Vert_{L^p_t L^q_x} \leq C_0 \Vert\varphi \Vert_{L^1_t \dot H^{s}}.$$
\end{lemma}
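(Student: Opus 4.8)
The plan is to derive the inhomogeneous estimate from the homogeneous one via a Minkowski-type integration argument, exploiting the semigroup property together with the hypothesis $(e^{At})^* = e^{-At}$. The key observation is that $v(t) = \int_0^t e^{(t-t')A}\varphi(t')\,dt'$ should be thought of, for each fixed $t'$, as the homogeneous solution with initial datum $\varphi(t')$ evolved from time $t'$ rather than $0$; that is, $e^{(t-t')A}\varphi(t')$ is the homogeneous flow $e^{tA}$ applied to $e^{-t'A}\varphi(t')$. Since $e^{-t'A} = (e^{t'A})^*$ is, under the standing assumptions, an isometry on $\dot H^s$ (the operators $e^{tA}$ here are generated by skew-adjoint operators, so they are unitary on every $\dot H^s$), we have $\Vert e^{-t'A}\varphi(t')\Vert_{\dot H^s} = \Vert \varphi(t')\Vert_{\dot H^s}$.

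First I would write, for each fixed $t'$, the tail $\mathbf 1_{t > t'}\, e^{(t-t')A}\varphi(t') = \mathbf 1_{t>t'}\, e^{tA}\big(e^{-t'A}\varphi(t')\big)$, and apply the homogeneous Strichartz estimate to the function $e^{-t'A}\varphi(t') \in \dot H^s$. Since restricting the time integration to $\{t > t'\}$ only decreases an $L^p_t$ norm, this gives
\begin{equation*}
\Big\Vert \mathbf 1_{t>t'}\, e^{(t-t')A}\varphi(t')\Big\Vert_{L^p_t L^q_x} \leq C_0 \Vert e^{-t'A}\varphi(t')\Vert_{\dot H^s} = C_0 \Vert \varphi(t')\Vert_{\dot H^s}.
\end{equation*}
Then I would integrate this in $t'$ over $\R$ and use the triangle inequality (Minkowski's integral inequality) to pull the $L^p_t L^q_x$ norm inside the $dt'$ integral:
\begin{equation*}
\Vert v \Vert_{L^p_t L^q_x} = \Big\Vert \itg_{\R} \mathbf 1_{t>t'}\, e^{(t-t')A}\varphi(t')\,dt' \Big\Vert_{L^p_t L^q_x} \leq \itg_{\R} \Big\Vert \mathbf 1_{t>t'}\, e^{(t-t')A}\varphi(t')\Big\Vert_{L^p_t L^q_x} dt' \leq C_0 \itg_{\R}\Vert \varphi(t')\Vert_{\dot H^s}\,dt',
\end{equation*}
which is exactly $C_0 \Vert \varphi\Vert_{L^1_t \dot H^s}$.

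The main obstacle, or rather the one point requiring a little care, is justifying that $e^{-t'A}$ preserves the $\dot H^s$ norm: this uses that the semigroups in question come from skew-symmetric operators (the geophysical operators $\mathbb P A$, $N\mathbb P J\mathbb P$, etc., are skew-adjoint after the Leray projection), so $e^{tA}$ acts as a Fourier multiplier of modulus $1$ and is unitary on $\dot H^s$; the hypothesis $(e^{At})^* = e^{-At}$ combined with $e^{At}e^{-At} = \mathrm{Id}$ encodes precisely this. A secondary technical point is the applicability of Minkowski's integral inequality, which is routine since $p, q \geq 1$ and one works first with, say, Schwartz data so that all integrals converge absolutely; the general case follows by density. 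One should also note that the argument does not even need the Christ--Kiselev lemma here because the target norm $L^p_t$ with $p \geq 1$ (in fact $p \geq 6$ in all our applications, but only $p\geq 1$ is used) allows the crude bound $\mathbf 1_{t>t'} \leq 1$ inside the time norm; this is what makes the inhomogeneous estimate follow so directly from the homogeneous one.
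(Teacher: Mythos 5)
Your proof is correct, and it takes a technically different route from the paper's. The paper proceeds by duality: it tests $v$ against functions $h$ in the unit ball of $L^{p'}_t L^{q'}_x$, rewrites the pairing via Fubini, introduces the operator $T: u^0 \mapsto (t\mapsto S(t)u^0)$ and its adjoint $T^*: h \mapsto \int_\R S(-t)h(t)\,dt$, uses the continuity of $T^*$ (dual to the assumed homogeneous estimate) together with the unitarity of $S$ on $\dot H^s$, and finally the crude bound $\Vert \chi(\cdot,t')h\Vert_{L^{p'}L^{q'}}\leq 1$. Your argument works on the primal side: you factor $e^{(t-t')A} = e^{tA}e^{-t'A}$, apply the homogeneous estimate directly to the shifted datum $e^{-t'A}\varphi(t')$, discard the time cutoff $\mathbf 1$ since it only decreases the $L^p_t$ norm, and conclude via Minkowski's integral inequality. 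Both proofs use exactly the same three ingredients — the homogeneous estimate (i.e.\ continuity of $T$), unitarity of the semigroup on $\dot H^s$ (which, as you note and as the paper also implicitly uses, requires more than the stated hypothesis $(e^{At})^*=e^{-At}$, namely that $e^{tA}$ commutes with $\vert D\vert^s$; this holds here because $e^{tA}$ is a Fourier multiplier), and the triviality that the time cutoff is bounded by $1$ — but yours avoids introducing the dual operator $T^*$ and is a bit more elementary. One cosmetic slip: your indicator should be $\mathbf 1_{0<t'<t}$ rather than $\mathbf 1_{t>t'}$ to match $\int_0^t$; this has no effect on the estimate since the only property used is that the indicator is bounded by $1$.
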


\begin{proof}

Denote $B^{p,q}_1 = \{ h \in \mathcal S,\ \Vert h \Vert_{L^{p'}_t L^{q'}_x} \leq  1 \}$ the set of Schwartz functions in the unit ball of the dual space of $L^p_t L^q_x$, and denote $S(t)$ the semigroup $e^{At}$. Let $$v = \itg_\R \chi(t,t') S(t-t')\varphi(t')dt',$$
where $\chi(t,t') = {\left\{\begin{array}{lcl}
1\ \ \text{if} \ \ t'\in (0,t) \\
0 \ \ \text{otherwise} \end{array}\right.}$. 
Let $h \in B^{p,q}_1$. Standard computations give

\begin{align*}
    \itg_{\R_t \times \R^3_x} v \overline{h}\ dt dx & =  \itg_{\R_t \times \R^3_x} \itg_0^t S(t-t')  \varphi(t') \overline{h}(t) dt' dt dx \\
    &= \itg_{\R_t} \itg_0^t \bigg( S(t)S(-t')  \varphi(t') \vert h(t) \bigg)_{L^2} dt' dt \\
    &= \itg_{\R_t} \itg_{\R_{t'}} \bigg( S(t)S(-t')  \varphi(t') \vert \chi(t,t') h(t) \bigg)_{L^2} dt' dt \\
    &= \itg_{\R_{t'}} \bigg(  S(-t') \varphi(t') \vert  \itg_{\R_t} S(-t)\chi(t,t') h(t) dt \bigg)_{L^2} dt' .
\end{align*}
Now, notice that if we set $$\fonction{T}{\dot H^s}{L^p_t L^q_x}{u^0}{t\mapsto S(t)u^0},$$
the dual operator of $T$ is $$ \fonction{T^*}{L^{p'}_tL^{q'}_x}{\dot H^{-s}}{h}{\itg_\R S(-t) h(t) dt},$$
and its operator norm is also bounded by $C_0$. Therefore, since the $L^2$ scalar product is equal to the duality bracket for homogeneous Sobolev spaces, we have 

$$    \itg_{\R_{t'}} \bigg(  S(-t') \varphi(t') \vert  \itg_{\R_t} S(-t)\chi(t,t') h(t) dt \bigg)_{L^2} dt' \leq \itg_{\R_{t'}} \Vert S(-t') \varphi(t') \Vert_{\dot H^{s}} \Vert T^*(\chi(\cdot, t') h) \Vert_{\dot H^{-s}} dt'.
$$
By continuity of $T^*$ and since the semigroup $S$ is unitary on homogeneous Sobolev spaces, it yields
$$\itg_{\R_t \times \R^d_x} v \overline{h}\ dt dx  \leq  \itg_{\R_{t'}} \Vert  \varphi(t') \Vert_{\dot H^{s}} \Vert \chi(\cdot,t') h \Vert_{L^{p'} L^{q'}} dt'. $$
Now, as $\Vert \chi(\cdot,t') h \Vert_{L^{p'} L^{q'}} \leq \Vert h \Vert_{L^{p'} L^{q'}} \leq 1$, we have the desired inequality 
$$\itg_{\R_t \times \R^3_x} v \overline{h}\ dt dx  \leq \Vert \varphi \Vert_{L^1_t \dot H^s},  $$
which concludes the proof by dual characterization of the norm $\Vert v \Vert_{L^p_t L^q_x}$.

\end{proof}
\newpage

\begin{small}
\printbibliography[title = Bibliography]
\end{small}

\end{document}